\newtheorem{theorem}{Theorem}[section]
\newtheorem{corollary}[theorem]{Corollary}
\newtheorem{lemma}[theorem]{Lemma}
\newtheorem{proposition}[theorem]{Proposition}
\theoremstyle{definition}
\newtheorem{definition}[theorem]{Definition}
\newtheorem{example}[theorem]{Example}
\newtheorem{remark}{Remark}
\newcommand{\R}{\mathbb{R}}
\newcommand{\Borel}{\mathcal{B}}
\newcommand{\blocks}{\mathcal{C}}
\newcommand{\F}{\mathcal{F}}
\newcommand{\p}{\mathcal{L}}
\newcommand{\prob}{\mathbb{P}}
\newcommand{\base}{\vartheta}
\newcommand{\rset}{A}
\newcommand{\oset}{A}
\newcommand{\rmatrix}{M}
\newcommand{\submatrix}{Q}
\newcommand{\norm}[1]{\|#1\|}
\newcommand{\abs}[1]{|#1|}
\renewcommand{\mod}[1]{~(\text{mod}~#1)}
\DeclareMathOperator{\proj}{Pr}
\DeclareMathOperator{\essinf}{ess~inf}
\DeclareMathOperator{\bv}{BV}
\DeclareMathOperator{\var}{var}
\DeclareMathOperator{\End}{End}
\begin{document}
\title[Metastability, Lyapunov exponents, escape rates and entropy in RDS]{Metastability, Lyapunov exponents, escape rates, and topological entropy in random dynamical systems}
\author{Gary Froyland}
\thanks{GF is partially supported by the ARC Discovery Project DP110100068}
\author{Ognjen Stancevic}
\address{ School of Mathematics and Statistics \\ University of New South Wales, Sydney NSW 2052, Australia}
\thanks{OS is supported by the ARC Centre of Excellence for Mathematics and Statistics of Complex Systems (MASCOS)}

\subjclass[2010]{Primary 37H15, 37C30, 37E05, 37B40; Secondary 37C60, 37B55}

\keywords{Random dynamical system, open dynamical system, escape rate, Lyapunov exponent, Perron-Frobenius operator, almost-invariant set, topological entropy}

\begin{abstract}
 We explore the concept of metastability in random dynamical systems, focussing on connections between random Perron-Frobenius operator cocycles and escape rates of random maps, and on topological entropy of random shifts of finite type. The Lyapunov spectrum of the random Perron-Frobenius cocycle and the random adjacency matrix cocycle is used to decompose the random system into two disjoint random systems with rigorous upper and lower bounds on (i) the escape rate in the setting of random maps, and (ii) topological entropy in the setting of random shifts of finite type, respectively.
\end{abstract}
\maketitle
\section{Introduction}\label{sec:intro}
Metastability in dynamical systems refers to the existence of subdomains, known as \emph{metastable sets} or \emph{almost-invariant sets}, within which trajectories are confined for long periods of time.
Metastability can be driven by small amounts of noise, for example, in a gradient system $\dot{x}=-\nabla V(x)$ where $V(x)$ is a double well potential, the noise enables sample paths to move between the metastable wells \cite{GOV87}. Alternatively, metastability can arise from purely deterministic means if trajectories pass between two or more subdomains sufficiently infrequently.
Applications of metastability or almost-invariance include molecular dynamics \cite{SHD01}, where the metastable sets are stable molecular conformations; astrodynamics \cite{DJKWL05}, where the metastable sets are regions from which asteroid escape is rare;  physical oceanography \cite{FPET07, DFHPS09}, where metastable regions are stable structures such as gyres and eddies;  and atmospheric science \cite{SFM10,FSM10}, where vortices in the stratosphere form time-dependent metastable regions.


In the deterministic setting one considers a nonsingular map $T:X\to X$ on a smooth manifold $X$.
The Perron-Frobenius operator $\p=\p_T$ is the natural push-forward for densities $f:X\to\mathbb{R}$ and is a key tool for studying the metastability of $T$.
The existence of metastability or almost-invariance in deterministic systems is often linked to the existence of isolated spectral values of the Perron-Frobenius operator \cite{DJ99,DFS00}.
Fixed points of $\p$ are invariant densities for $T$, and in suitable Banach spaces, the presence of real isolated sub-unit eigenvalues of the Perron-Frobenius operator correspond to eigenfunctions whose decay rates are \emph{slower} than the exponential separation of nearby trajectories \cite{DFS00}.
These eigenfunctions have been used to heuristically decompose the domain into metastable regions, linking the slow exponential decay of the eigenfunctions with slow exchange of trajectories.
If the Perron-Frobenius operator is denoted by $\p$ and one has $\p f = \lambda f$ for an isolated $\lambda \in \R$, the simplest way to decompose the system domain is to partition it into two sets $A_+:=\{f\ge 0\}$ and $A_-:=\{f<0\}$., see e.g. \cite{DJ99}.
At the functional level, exchange of mass between these two sets corresponds to cancellation between the positive and negative parts of $f$.
Since $\lambda$ is close to 1, this cancellation is low and therefore the mass exchange is also low (ie.\ the measure of $A_+\cap T^{-1}A_-$ and $A_-\cap T^{-1}A_+$ is low).



The slow exponential decay of such eigenfunctions may also be linked with slow exponential \emph{escape}.
One may create an \emph{open} dynamical system by restricting the dynamics to a metastable set.
Trajectories will stay in the metastable set for a while, but eventually leave.
One may ask how the \emph{rate of escape} from a metastable set is related to the second (say) eigenvalue $\lambda\in(0,1)$ of the Perron-Frobenius operator $\p:L^1(X)\to L^1(X)$ acting on the full domain.
The authors answered this question in \cite{FS10}: if the metastable set is defined as $\{x\ : \ f(x) > 0\}$ where $\p f = \lambda f$, then the rate of escape from this set is slower than $-\log \lambda$.

The concept of metastability and almost invariant sets was extended to random dynamical systems in \cite{FLQ09}, where these sets are referred to as \emph{coherent structures}.
In the random setting, the metastable sets may depend on the current random configuration $\omega\in\Omega$;  the dynamics on $\Omega$ is driven by $\vartheta:\Omega\circlearrowleft$, and the dynamics on $X$ is now governed by a cocycle of maps $\cdots T_{\vartheta^2\omega}\circ T_{\vartheta\omega}\circ T_\omega$.
One seeks a family of sets $A_\omega,A_{\vartheta\omega},A_{\vartheta^2\omega},\ldots$ so that the size of $A_\omega\cap T_\omega^{-1}A_{\vartheta\omega}\cap T^{-1}_\omega\circ T_{\vartheta\omega}^{-1}A_{\vartheta^2\omega}\cdots$ decays slowly.
Thus, the family $A_\omega,A_{\vartheta\omega},A_{\vartheta^2\omega},\ldots$ remains approximately coherent:  starting in $A_\omega$, forward trajectories mostly fall in the sequence of sets $A_\omega,A_{\vartheta\omega},A_{\vartheta^2\omega},\ldots$.
In physical applications, $\omega$ may represent time, $\vartheta$ the passage of time, and $T_\omega$ the physical dynamics at time $\omega$.
As a geophysical example, $X$ is the upper part of the ocean and $T_\omega$ describes evolution of water particles over a period of one week during a particular week $\omega$.
The $A_\omega,A_{\vartheta\omega},A_{\vartheta^2\omega},\ldots$ may represent, for example, the locations of an oceanic eddy, which meanders about the ocean surface, carries particularly warm or cold water with it, and very slowly disperses over time (the dispersion corresponds to random trajectories eventually falling outside the sequence of sets $A_\omega,A_{\vartheta\omega},A_{\vartheta^2\omega},\ldots$).

To move from deterministic to random (or time-dependent) concepts of metastability, \cite{FLQ09} and \cite{FLQ10} introduced the \emph{Lyapunov spectrum} for cocycles of random Perron-Frobenius operators, replacing the spectrum of a single deterministic Perron-Frobenius operator $\p$.
One studies decay rates of the norms of functions under the random composition $\cdots\p_{\vartheta^2\omega}\circ\p_{\vartheta\omega}\circ \p_{\omega}$.
In certain settings \cite{FLQ09,FLQ10}, one can identify equivariant random subspaces $E(\omega)$ that contain functions whose norm decays at specific rates and satisfy $\p_\omega E(\omega)=E(\vartheta\omega)$.
These random \emph{Oseledets subspaces} $E(\omega)$ play the role of eigenfunctions when determining the random metastable sets.
Suppose that $E(\omega)$ is one-dimensional, and let $f_\omega\in E(\omega)$.
If $\lim_{n\to\infty}(1/n)\log\|\cdots\p_{\vartheta^2\omega}\circ\p_{\vartheta\omega}\circ \p_{\omega}f_\omega\|=\lambda$, and $\lambda$ is close to 0, the norm of the function $f_\omega$ decays slowly.
One may define $A_{\omega,+}:=\{f_\omega\ge 0\}$, and in analogy to the cancellation argument given above in the autonomous setting, one expects the family of sets $A_{\omega,+},A_{\vartheta\omega,+},A_{\vartheta^2\omega,+},\ldots$ to be such that the size of $A_{\omega,+}\cap T_\omega^{-1}A_{\vartheta\omega,+}\cap T^{-1}_\omega\circ T_{\vartheta\omega^{-1}}A_{\vartheta^2\omega,+}\cdots$ decays slowly, and therefore represents a coherent family of sets.
Numerical algorithms and experiments based on this Perron-Frobenius cocycle theory were detailed in \cite{FLS10}.

Our goal in this paper is to \emph{link the slow decay of random functions induced by the Perron-Frobenius cocycle with escape rates from random metastable sets}.
Studies of escape rates for random dynamical systems have, to our knowledge, largely been concerned with escape from \emph{fixed} ($\omega$-invariant) sets under random or randomly perturbed maps (see for example \cite{Han86,Gra89}; for more recent work see \cite{DG09, RGM10}).
Other recent work considers escape rates for iid compositions of expanding interval maps with small holes \cite{BV}.
In this paper we work with a more general concept of escape from a \emph{random} set under a \emph{random} map.
We extend the results of \cite{FS10} to random dynamical systems by showing a relationship between Lyapunov spectrum and the corresponding random escape rates from metastable sets.

More precisely, in Section \ref{sec:general}, we show in a rather general setting ($X$ measurable, nonsingular dynamics) that given a Lyapunov exponent $\lambda(\omega,f)\lesssim 0$ of $\omega\in\Omega$ and $f\in L^1$, one can define a metastable random set $A$ along the orbit of $\omega$ as $\rset(\base^n\omega) = \{\p^{(n)}_{\omega}f \ge 0\}$.
Our first main result states that the escape rate from $\rset$ is slower than $-\lambda(\omega,f)$.

In Section \ref{sec:oseledets} we extend these results to random dynamical systems that admit an Oseledets splitting and, in particular, to Rychlik random dynamical systems where the dynamics are given by random expanding piecewise $C^2$ interval maps and $\p$ acts on $\bv=\bv([0,1])$.
This setting has historically been a standard testbed for spectral analysis of chaotic dynamical systems.
For example \cite{HK82,Ryc83} proved that if $1/|T'(x)|$ has bounded variation and if $\varrho:=\lim_{n\to\infty}\left(\|1/|(T^n)'|\|_\infty\right)^{1/n}<1$ then $\p:\bv\to \bv$ is quasi-compact.
Soon after, Keller \cite{Kel84} proved that $\varrho$ is the essential spectral radius.
As $\varrho$ is intimately connected with the average expansion experienced along orbits, $\bv$ spectral points of $\p$ larger than $\varrho$ in magnitude cannot be explained by local expansion of $T$ and must be due to the influence of global structures such as almost-invariant and metastable sets, producing decay rates \emph{slower than the average local expansion rate}.
In the random setting, Froyland \emph{et al.} \cite{FLQ10} proved a result parallel to \cite{Kel84} for random Rychlik maps.
Our main result in Section \ref{sec:oseledets} relates the escape rate from random almost-invariant sets to isolated values in the Lyapunov spectrum of $\p$.



In Section \ref{sec:rsft} we adapt the techniques of Section \ref{sec:general} to partition a random shift of finite type into two disjoint random subshifts, each with a topological entropy that is large, relative to the topological entropy of the original shift.
The more metastable the random shift, the larger the topological entropy of the subshifts.
More precisely, we show how to constructively decompose a random shift $\Sigma_\rmatrix$ into two complementary random shifts $\Sigma_{\submatrix}, \Sigma_{\submatrix'}$ with disjoint alphabets so that $h(\Sigma_\submatrix(\omega^*)),h(\Sigma_{\submatrix'}(\omega^*))\ge \lambda_2$, where $\omega^*\in\Omega$ and $\lambda_2$ is the second largest Lyapunov exponent of the adjacency matrix cocycle for $\Sigma_\rmatrix$.




\section{A result on escape rate for a general random dynamical system}\label{sec:general}

We use the notation of Arnold \cite{Arn98}.
Let $\base$ be an invertible measure-preserving transformation of a probability space $(\Omega, \F, \prob)$. We will call $\base$ the \emph{base transformation} and the tuple $(\Omega,\F,\prob, \base)$ the \emph{base dynamical system}. For technical reasons we shall additionally assume that singletons of $\Omega$ are $\F$-measurable. By $\End(Z)$ we denote the set of endomorphisms (or transformations) of a space $Z$, which preserves whatever structure $Z$ may have (e.g.~linearity or measurability). A \emph{random dynamical system} is a tuple $(\Omega, \F, \prob, \base, Z, \tilde\Phi)$ where $\tilde\Phi:\Omega \to \End(Z)$ is a family of transformations of $Z$ indexed by $\Omega$. A random dynamical system defines a discrete \emph{cocycle} $\Phi:\mathbb{N}\times \Omega \to \text{End}(Z)$ by
\begin{equation}\label{eq:cocycle}
	\Phi(n,\omega) = \Phi^{(n)}(\omega) = \Phi_{\omega}^{(n)} := \tilde\Phi(\base^{n-1}\omega)\circ \cdots \circ \tilde\Phi(\base\omega)\circ\tilde\Phi(\omega)
\end{equation}
with the property
\begin{itemize}
	\item[(i)] $\Phi(0,\omega) = id$;
	\item[(ii)] $\forall m,n \in \mathbb{N}, \Phi(m+n,\omega) = \Phi(m,\base^n\omega) \circ \Phi(n,\omega)$.
\end{itemize}
The function $\Phi$ is referred to as the cocycle while $\tilde\Phi$ is called the \emph{generator}. We will study two types of cocycles: \emph{measurable map cocycles} and their \emph{Perron-Frobenius operator cocycles}.

Firstly, let us assume that $Z$ is measure space $(X,\Borel,m)$ where $\Borel$ is its $\sigma$-algebra and $m$ a finite measure. Also let $\Borel(\mathbb{N})$ be the Borel $\sigma$-algebra on $\mathbb{N}$ (with respect to the discrete topology). A \emph{measurable (map) cocycle} is a cocycle $T:\mathbb{N}\times \Omega \to \End(X)$ such that the mapping $(n,\omega, x) \mapsto T_{\omega}^{(n)}(x)$ is $(\Borel(\mathbb{N})\otimes\mathcal{F}\otimes\Borel, \Borel)$-measurable and each $T_{\omega}:X \to X$ is non-singular ($m\circ T_{\omega}^{-1}\ll m$).



\subsection{Relating escape rate to Lyapunov exponents}\label{sec:mainthm}

Define a \emph{random set}\footnote{Our definition of a random set is slightly weaker than Arnold's \cite{Arn98} definition of a closed random set, where $X$ is additionally Polish (with metric $d$) and for every $x\in X$ the mapping $\omega\mapsto d(x,A(\omega))$ is measurable.} to be any set-valued function $\rset:\Omega \to \Borel$ such that the graph $\{(\omega, \rset(\omega))\ : \ \omega\in\Omega\} \subset \Omega \times X$ is measurable in the product $\sigma$-algebra $\F\otimes \Borel$ . One is then able to define \emph{rate of escape} from a given random set under the random dynamics of the cocycle.

\begin{definition}\label{def:escape}
  Let ${T}:\mathbb{N}\times\Omega\to\text{End}(X,\Borel, m)$ be a measurable cocycle over $(\Omega, \F,\prob,\base)$  and let $\rset:\Omega \to \Borel$ be a random set. The \emph{random escape rate} with respect to $m$ is the non-negative valued function $E(\rset, \cdot):\Omega \to \mathbb{R}$ given by
	\begin{equation}\label{eq:escape}
		E(\rset, \omega) := -\limsup_{n\to\infty}\frac{1}{n}\log m(\rset^{(n)}(\omega)), \quad \omega\in\Omega,
	\end{equation}
	where
	\begin{equation}\label{eq:rsetn}
	  \rset^{(n)}(\omega) := \bigcap_{i=0}^{n-1}{{T}(i,\omega)}^{-1}\rset(\base^i\omega).
	\end{equation}
\end{definition}

\begin{figure}[hbt]
  \psfrag{t1}{$T_\omega$}
  \psfrag{t2}{$T_{\vartheta\omega}$}
  \psfrag{t3}{$A(\omega)$}
  \psfrag{t4}{$A(\vartheta\omega)$}
  \psfrag{t5}{$A(\vartheta^2\omega)$}
  \begin{center}
    \includegraphics[width=11cm]{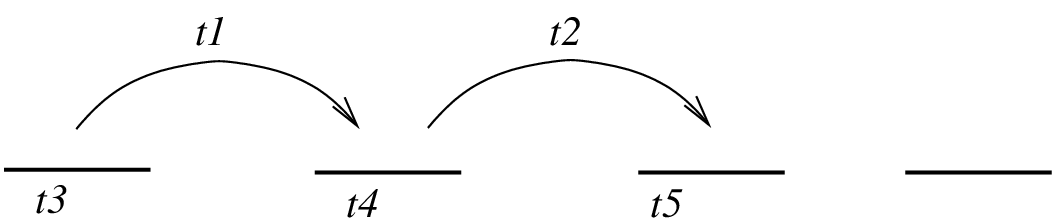}
  \end{center}
 \caption{Schematic of the dynamics between random sets.}
  \label{fig:FLQ09a}
\end{figure}

The random escape rate describes the exponential rate at which trajectories escape from the sequence of sets $A(\omega),A(\vartheta\omega),A(\vartheta^2\omega),\ldots$;  roughly speaking, the Lebesgue measure of points in $A(\omega)$ that remain in this sequence of sets for $n$ iterations is proportional to $\exp(-E(\rset,\omega)n)$.
Clearly, a smaller escape rate $E(\rset,\omega)$ corresponds to a greater proportion of points in $A(\omega)$ having trajectories that remain in the random sequence of sets for a given number of iterations $n$.
One can arguably connect a lower escape rate with a greater ``coherence'' of the sequence $A(\omega),A(\vartheta\omega),A(\vartheta^2\omega),\ldots$.
Returning to the geophysical example in the introduction, if the sets $A(\omega),A(\vartheta\omega),A(\vartheta^2\omega),\ldots$ represent the location of an ocean eddy at weekly intervals, a lower value of $E(\rset,\omega)$ corresponds to an eddy that takes longer to disperse, as a larger proportion of water particles continue to be carried along in the sequence of sets for longer.

In Definition \ref{def:escape} we defined escape rate from a random set. It is often of interest in dynamical systems to study properties of single orbits. In order to study the escape rate along a sample orbit we can restrict the domain of a random set just to this particular orbit.

\begin{definition}
  Let $A:\Omega\to\Borel$ be a random set and let $\omega^*\in\Omega$. We shall refer to the restriction of $A$ to the orbit $\{\base^n\omega^*\}_{n\in\mathbb{Z}^+}$ as an \emph{orbit set}.
\end{definition}

The following proposition shows that any mapping $A:\{\base^n\omega^*\}_{n\in\mathbb{Z}^{+}}\to\Borel$ is an orbit set, as it may be trivially extended to a random set.
\begin{proposition}\label{prop:rsextension}
  For a fixed $\omega^*\in\Omega$, any mapping $A:\{\base^n\omega^*\}_{n\in\mathbb{Z}^+}\to \Borel$ may be extended to a random set by defining $A(\omega) = X$ for all $\omega \in \Omega\setminus\{\base^n\omega^*\}_{n\in\mathbb{Z^+}}$.
\end{proposition}
\begin{proof}
  To see that this extension indeed produces a set $\{(\omega, A(\omega)\}\in\F\otimes\Borel$ note that we may write the graph of $A$ as the union of $(\Omega\setminus\{\base^n\omega^*\}_{n\in\mathbb{Z^+}})\times X$ and $\bigcup_{n\in\mathbb{Z}^+} (\base^n\omega^*, A(\base^n\omega^*))$. The former set is a rectangle in $\F\times\Borel$ and the latter set is a countable union of measurable rectangles, as all singletons are $\F$-measurable. Thus the graph of $A$ is $(\F\otimes\Borel)$-measurable.
\end{proof}

Despite our interest being primarily in orbit sets, we note that when the base is ergodic, under a mild condition a random set has almost-everywhere constant escape rate.

\begin{proposition}\label{prop:er_cae}
  Assume that the base system $(\base, \prob)$ is ergodic and that for almost every $\omega\in\Omega$ the Radon-Nikodym derivative, $\frac{d(m\circ {T}_{\omega}^{-1})}{dm}$, is bounded. For any fixed random set $\rset: \Omega\to\Borel$, $E(\rset, \omega)$ is constant $\prob$-almost everywhere.
\end{proposition}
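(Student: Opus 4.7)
The plan is to show that $\omega \mapsto E(\rset, \omega)$ is subinvariant under $\base$ in the sense $E(\rset, \base\omega) \leq E(\rset, \omega)$ for $\prob$-a.e.\ $\omega$, and then to invoke ergodicity of $(\base, \prob)$ to conclude almost-sure constancy of $E(\rset, \cdot)$.

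First I would unwind the definition of $\rset^{(n+1)}(\omega)$ via the cocycle identity $T(i, \omega) = T(i-1, \base\omega) \circ T(1, \omega)$ for $i \geq 1$, which yields
\begin{equation*}
\rset^{(n+1)}(\omega) \;=\; \rset(\omega) \cap T_\omega^{-1}\bigl(\rset^{(n)}(\base\omega)\bigr) \;\subseteq\; T_\omega^{-1}\bigl(\rset^{(n)}(\base\omega)\bigr).
\end{equation*}
Writing $C(\omega) := \bigl\|\tfrac{d(m \circ T_\omega^{-1})}{dm}\bigr\|_\infty$, which is finite on a full-measure set by hypothesis, I obtain the comparison
\begin{equation*}
m\bigl(\rset^{(n+1)}(\omega)\bigr) \;\leq\; C(\omega)\, m\bigl(\rset^{(n)}(\base\omega)\bigr).
\end{equation*}

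Taking logarithms, dividing by $n+1$, and passing to $\limsup$ as $n \to \infty$ (noting $\tfrac{1}{n+1}\log C(\omega) \to 0$, and that $a_n/n$ and $a_n/(n+1)$ share the same $\limsup$), I conclude $E(\rset, \base\omega) \leq E(\rset, \omega)$ for $\prob$-a.e.\ $\omega$. For any $c \in \R$, this forces the sublevel set $S_c := \{E(\rset, \cdot) \leq c\}$ to satisfy $S_c \subseteq \base^{-1}(S_c)$ modulo a $\prob$-null set. Since $\base$ preserves $\prob$, we have $\prob(S_c) = \prob(\base^{-1}(S_c))$, so the inclusion is an equality up to null, and $S_c$ coincides $\prob$-a.s.\ with a genuinely $\base$-invariant set. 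Ergodicity then forces $\prob(S_c) \in \{0,1\}$ for every $c$, which is precisely almost-sure constancy of $E(\rset, \cdot)$.

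I expect the main technical point to be mild: one needs measurability of $\omega \mapsto m(\rset^{(n)}(\omega))$ (hence of $E(\rset, \cdot)$), which follows from the joint measurability of $(n,\omega,x) \mapsto T^{(n)}_\omega(x)$ together with measurability of the random set $\rset$. The conceptual point worth emphasizing is that the cocycle is forward-only, so one only obtains the one-sided comparison $E(\rset, \base\omega) \leq E(\rset, \omega)$; however, combined with $\base$-invariance of $\prob$ this suffices, because measure preservation automatically upgrades one-sided inclusion of invariant sets to equality mod null.
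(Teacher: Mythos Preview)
Your proof is correct and follows essentially the same approach as the paper: derive a one-sided inequality between $E(\rset,\omega)$ and $E(\rset,\base\omega)$ from the relation $\rset^{(n)}(\omega)\subseteq T_\omega^{-1}\bigl(\rset^{(n-1)}(\base\omega)\bigr)$ and the bounded Radon--Nikodym derivative, then use measure preservation plus ergodicity to force the sub/superlevel sets to be trivial. In fact your inequality $E(\rset,\base\omega)\le E(\rset,\omega)$ has the correct direction; the paper states the reverse, which appears to be a sign slip, though the ergodicity argument works either way.
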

\begin{proof}
  We begin with the observation that $\rset^{(n)}(\omega) = {T}_\omega^{-1}(\rset^{(n-1)}(\base\omega)) \cap \rset(\omega)$ (see \eqref{eq:rsetn}) so that $m(\rset^{(n)}(\omega)) \le m({T}_{\omega}^{-1}(\rset^{(n-1)}(\base\omega)))$. Using the boundedness of the Radon-Nikodym derivative one sees that $E(\rset, \omega) \le E(\rset, \base\omega)$. We will now show that $E(\rset, \omega) = E(\rset)$ for almost any $\omega\in\Omega$. Assume otherwise; as $\rset$ is a random set it is straightforward to show that $E(\rset, \cdot)$ is measurable and that there exists $c\in\mathbb{R}$ such that the set $S := \{\omega\ :\ E(\rset,\omega) \ge c\}$ has $\prob(S) \in (0,1)$. Since $\base^{-1}(S) \supseteq S$ and $\base$ preserves $\prob$ we must have $\base^{-1}(S) = S$ a.e., but this cannot be since $\prob$ is ergodic.
\end{proof}

On the other hand, supposing that $Z= L^1(X,\Borel, m)$ (or a subspace of $L^1(X,\Borel,m)$),  we may define a \emph{Perron-Frobenius operator cocycle} as follows.

\begin{definition}
  Let ${T}:\mathbb{N}\times \Omega \to \text{End}(X,\Borel, m)$ be a measurable map cocycle over $(\Omega,\F,\prob,\base)$. The corresponding \emph{Perron-Frobenius operator cocycle} is a linear cocycle ${\p}:\mathbb{N}\times \Omega \to \text{End}(L^1(X,\Borel, m))$ whose generator $\tilde\p$ is given by
\begin{equation}\label{eq:pf}
	\int_{B} \tilde\p(\omega) f \ dm = \int_{{T}_\omega^{-1}B}f \ dm, \quad \forall \omega \in \Omega, \ \forall B\in\Borel, \ \forall f\in L^1(X, \Borel, m).
\end{equation}
\end{definition}

\begin{definition}
  Let $\p:\mathbb{N}\times\Omega \to \End(L^1(X,\Borel,m))$ be a Perron-Frobenius operator cocycle corresponding to a measurable map cocycle $T:\mathbb{N}\times \Omega \to \End(X)$. For any $f\in L^1(X,\Borel,m)$, and $\omega\in\Omega$ the \emph{Lyapunov exponent} is defined to be
\begin{displaymath}
	\lambda(\omega, f) := \limsup_{n\to\infty}\frac{1}{n}\log\norm{\p_{\omega}^{(n)}f}_{L^1}.
\end{displaymath}
We also define the \emph{Lyapunov spectrum} to be the set of all Lyapunov exponents: $\Lambda(\omega):= \{\lambda(\omega, f)\ :\ f\in L^{1}(X,\Borel,m)\}$, and the quantity $\lambda(\omega) \in \mathbb{R}$ by $\lambda(\omega):= \limsup_{n\to\infty}\frac{1}{n}\log\norm{\p_{\omega}^{(n)}}_{op}$.
\end{definition}

As each $\p_{\omega}:L^1\circlearrowleft$ is a Markov operator we have $\norm{\p_{\omega}f}_{L^1}\le\norm{f}_{L^1}$ and therefore $\Lambda(\omega)\subseteq [-\infty,0]$. With the definition of the Perron-Frobenius cocycle in \eqref{eq:pf}, it is natural to use the $L^1$-norm for calculating the Lyapunov spectrum. However we will see later in Section \ref{sec:oseledets} that when working with subspaces of $L^1$ other norms are sometimes more informative.

Our main result relates the Lyapunov exponents of a Perron-Frobenius operator cocycle ${\p}$ to the rates of escape from particular orbit sets under the corresponding measurable map cocycle ${T}$.

\begin{theorem}[Main Theorem]\label{thm:main}
  Let ${T}:\mathbb{N}\times\Omega \to \End(X,\Borel, m)$ be a measurable map cocycle over $(\Omega, \F, \prob, \base)$ and let $\p:\mathbb{N}\times\Omega \to \End(L^1(X,\Borel,m))$ be the corresponding Perron-Frobenius cocycle as defined in \eqref{eq:pf}. Fix an aperiodic $\omega^* \in \Omega$ and suppose that there exists an $f\in L^\infty$ such that $\lambda(\omega^*, f) < 0$. Let $\oset_{+},\oset_{-}:\{\base^n\omega^*\}_{n\in\mathbb{Z}^{+}}\to \Borel$ be defined by
  \begin{equation}\label{eq:mssets}
    \oset_{\pm}(\base^n\omega^*):= \{x\in X\ :\ \pm \p_{\omega^*}^{(n)}f(x) > 0\}, \quad n\in\mathbb{Z}^+.
  \end{equation}
  Then $A_{\pm}$ are orbit sets and one has $E(\oset_{\pm},\omega^*)\le -\lambda(\omega^*, f)$.
\end{theorem}

The fact that the sets defined in \eqref{eq:mssets} are orbit follows from Proposition \ref{prop:rsextension}. The proof of the rest of Theorem \ref{thm:main} follows after a preliminary lemma.

\begin{lemma}\label{lem:main}
  In the notation of Theorem \ref{thm:main} we have for every $n\in\mathbb{Z}^{+}$
	\begin{displaymath}
	  \int_{\oset_{+}(\base^n\omega^*)}\p_{\omega^*}^{(n)}f \ dm = \frac{1}{2}\norm{\p_{\omega^*}^{(n)}f}_{L^1}.
	\end{displaymath}
\end{lemma}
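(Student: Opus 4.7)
The plan is to reduce the identity to the simple observation that, for any real-valued integrable $g$, one has $\int_{\{g>0\}} g\,dm = \tfrac12(\|g\|_{L^1} + \int_X g\,dm)$, and then to show that in our setting the second term vanishes.

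Set $g := \p^{(n)}_\omega f$. Splitting $X$ into $\{g>0\}, \{g<0\}, \{g=0\}$ gives both
\begin{equation*}
\|g\|_{L^1} = \int_{\{g>0\}} g\,dm - \int_{\{g<0\}} g\,dm
\quad\text{and}\quad
\int_X g\,dm = \int_{\{g>0\}} g\,dm + \int_{\{g<0\}} g\,dm,
\end{equation*}
so adding these two equations yields the identity above. Hence the lemma will follow once we show $\int_X \p^{(n)}_\omega f\,dm = 0$.

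The key input is that Perron--Frobenius operators preserve the total integral: taking $B = X$ in the defining relation \eqref{eq:pf} gives $\int_X \tilde\p(\omega)h\,dm = \int_X h\,dm$ for every $h\in L^1$, and iterating along the cocycle yields $\int_X \p^{(n)}_\omega f\,dm = \int_X f\,dm$ for all $n$. Consequently $\|\p^{(n)}_\omega f\|_{L^1} \ge \bigl|\int_X \p^{(n)}_\omega f\,dm\bigr| = \bigl|\int_X f\,dm\bigr|$ independently of $n$. If $\int_X f\,dm$ were nonzero this would force $\lambda(\omega,f) = \limsup_n \tfrac1n\log\|\p^{(n)}_\omega f\|_{L^1} \ge 0$, contradicting the hypothesis $\lambda(\omega,f)<0$. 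Therefore $\int_X f\,dm = 0$, and thus $\int_X g\,dm = 0$ as required.

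The argument is essentially two lines once the integral-preserving property of $\p_\omega$ is invoked, so there is no real obstacle; the only subtlety is spotting that the strict negativity of the Lyapunov exponent is exactly what is needed to force $f$ to be a zero-integral observable. The hypothesis $f\in L^\infty(m)$ is not used at this stage (it will matter later in the proof of Theorem \ref{thm:main}, to compare $\int_{\rset_+} \p^{(n)}_\omega f\,dm$ with $m(\rset^{(n)}_+(\omega))$ via an $L^\infty$ bound on $\p^{(n)}_\omega f$).
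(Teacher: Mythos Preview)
Your proof is correct and follows essentially the same route as the paper's: both show $\int_X \p^{(n)}_\omega f\,dm=0$ via integral preservation together with $\lambda(\omega,f)<0$, then add the expressions for $\|\p^{(n)}_\omega f\|_{L^1}$ and $\int_X \p^{(n)}_\omega f\,dm$ to isolate the positive-part integral. Your remark that the $L^\infty$ hypothesis is not needed here is accurate.
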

\begin{proof}
  Firstly we will show that $\int_{X}\p_{\omega^*}^{(n)}f \ dm = 0$ for all $n\ge 0$. From \eqref{eq:pf} one can see that $\p_{\omega^*}$ preserves integrals over all of $X$, therefore $\int_{X}\p_{\omega^*}^{(n)}f\ dm = M$ -- a constant for all $n\ge 0$. This implies that $\norm{\p_{\omega^*}^{(n)}f}_{L^1} \ge \abs{M}$ for all $n\ge 0$. Suppose that $M\neq 0$. Then
	\begin{displaymath}
          \lambda(\omega^*, f) = \limsup_{n\to\infty}\frac{1}{n} \log \norm{\p_{\omega^*}^{(n)}f}_{L^1} \ge \limsup_{n\to\infty}\frac{1}{n}\log \abs{M} = 0.
	\end{displaymath}
        This is a contradiction as $\lambda(\omega^*, f) < 0$, therefore $M=0=\int_X \p_{\omega^*}^{(n)}f \ dm$. Now we have
\begin{equation}
  0 = \int_{\oset_{+}(\base^n\omega^*)} \p_{\omega^*}^{(n)}f \ dm + \int_{X\setminus\oset_{+}(\base^n\omega^*)}\p_{\omega^*}^{(n)}f \ dm
	\label{eq:lem1}
\end{equation}
and
\begin{equation}
  \norm{\p_{\omega^*}^{(n)}f}_{L^1} = \int_{\oset_{+}(\base^n\omega^*)} \p_{\omega^*}^{(n)}f\ dm - \int_{X\setminus \oset_{+}(\base^n\omega^*)} \p_{\omega^*}^{(n)}f\ dm.
  \label{eq:lem2}
\end{equation}
Adding equations \eqref{eq:lem1} and \eqref{eq:lem2} yields the required result.
\end{proof}
\begin{proof}[Proof of Theorem \ref{thm:main}]
  For any set $S\subseteq X$ denote the indicator function of $S$ by $\mathbbm{1}_{S}:X\to\{0,1\}$. Let $j,n$ be integers such that $0\le j \le n$ and let $B\in\Borel$. Using \eqref{eq:pf} we derive the following:

	\begin{align*}
	  \int_{B} \p_{\omega^*}^{(j+1)}f \ dm &= \int_{T_{\base^{j}\omega^*}^{-1}B}\p_{\omega^*}^{(j)}f \ dm\\
	  & = \int_{T_{\base^{j}\omega^*}^{-1}B} (\p_{\omega^*}^{(j)}f)\mathbbm{1}_{\rset_{+}(\base^{j}\omega^*)} \ dm + \int_{T_{\base^{j}\omega^*}^{-1}B} (\p_{\omega^*}^{(j)}f) \mathbbm{1}_{X\setminus \rset_{+}(\base^{j}\omega^*)}\ dm\\
	  & \le \int_{T_{\base^{j}\omega^*}^{-1}B} (\p_{\omega^*}^{(j)}f)\mathbbm{1}_{\rset_{+}(\base^{j}\omega^*)} \ dm \\
	  &= \int_{T_{\base^{j}\omega^*}^{-1}B \cap \rset_{+}(\base^{j}\omega^*)} \p_{\omega^*}^{(j)}f \ dm.
	\end{align*}
Hence
\begin{displaymath}
  \int_{B} \p_{\omega^*}^{(j+1)}f \ dm \le \int_{T_{\base^{j}\omega^*}^{-1}B\cap \rset_{+}(\base^{j}\omega^*)} \p_{\omega^*}^{(j)}f \ dm.
\end{displaymath}
Now letting $B = \rset_{+}^{(n-j-1)}(\base^{j+1}\omega^*)$ (defined as in \eqref{eq:rsetn}) we have for all $j\ge 0$
\begin{displaymath}
  \int_{\rset_{+}^{(n-j-1)}(\base^{j+1}\omega^*)} \p_{\omega^*}^{(j+1)}f \ dm \le \int_{\rset_{+}^{(n-j)}(\base^{j}\omega^*)} \p_{\omega^*}^{(j)}f \ dm,
\end{displaymath}
where we have used the relation $\rset_{+}^{(n-j)}(\base^{j}\omega^*) = \rset_{+}(\base^{j}\omega^*) \cap T_{\base^{j}\omega^*}^{-1}(\rset_{+}^{(n-j-1)}(\base^{j+1}\omega^*))$, easily obtainable from \eqref{eq:rsetn}. By considering all $j = 0,1,\dots , n-1$ we arrive at the following series of inequalities:
\begin{displaymath}
  \int_{\rset_{+}^{(0)}(\base^n\omega^*)} \p_{\omega^*}^{(n)}f \ dm \le \int_{\rset_{+}^{(1)}(\base^{n-1}\omega^*)} \p_{\omega^*}^{(n-1)}f \ dm \le \cdots \le \int_{\rset_{+}^{(n)}(\omega^*)} f \ dm.
\end{displaymath}
Hence
\begin{displaymath}
  \frac{1}{2} \norm{\p_{\omega^*}^{(n)}f} = \int_{\rset_{+}(\base^n\omega^*)} \p_{\omega^*}^{(n)}f \ dm \le \int_{\rset_{+}^{(n)}(\omega^*)} f \ dm \le \norm{f}_{L^{\infty}} m(\rset_{+}^{(n)}(\omega^*)),
\end{displaymath}
where the equality above is due to Lemma \ref{lem:main}, and the second inequality holds because $f \in L^{\infty}$.
By taking logarithms, dividing by $n$ and taking limit supremum as $n\to\infty$ we arrive at the required inequality $E(\rset_{+}, \omega^*) \le -\lambda(\omega^*, f)$. The inequality for $E(\rset_{-},\omega^*)$ is obtained similarly by considering $-f$ in place of $f$.
\end{proof}

\begin{remark}\label{rem:conditional}
 We note that if for a random set $\rset:\Omega \to \Borel$ (or an orbit set $\oset : \{\base^n\omega^*\}_{n\in\mathbb{Z}^{+}}\to\Borel$) one defines a \emph{conditional operator cocycle} ${\p}_\rset$ by $\tilde{\p}_\rset(\omega)f := \tilde\p(\omega)(f\chi_{\rset(\omega)})$ for all $\omega\in\Omega$ (or $\omega \in \{\base^n\omega^*\}_{n\in\mathbb{Z}^{+}}$) and $f\in L^1$, then Lebesgue escape rate is given by $E(\rset,\omega) = -\limsup_{n\to\infty} (1/n) \log \norm{\p_{\rset}(n,\omega)\mathbbm{1}}_{L^1}$, which is equal in absolute value to the Lyapunov exponent of a constant function with respect to this conditional cocycle.
\end{remark}
\begin{remark}
  Note that the sets $\oset_{\pm}$ defined by \eqref{eq:mssets} are only guaranteed to be orbit sets when $\omega^*$ is aperiodic. If $\omega$ is periodic, one would further require $f$ to be an eigenfunction of $\p^{(p)}_{\omega^*}$ (where $p$ is the period). This situation has been treated by Theorem 2.4 of \cite{FS10}.
\end{remark}

\subsection{Choosing a metastable partition}

Theorem \ref{thm:main} presents a method of finding pairs of orbit sets whose $\omega$-fibres form 2-partitions of $X$. Both of these orbit sets have low escape rates. Theorem \ref{thm:main} applies to a large class of random dynamical systems. For the remainder of this section we will investigate some of the consequences of this result. Lemma \ref{lem:FullSpectrum} will show that in a very general setting one may choose any $\rho\in[-\infty,0)$, find an appropriate $f: \Omega\to L^{1}(X, \Borel, m)$ with Lyapunov exponent $\lambda(\omega, f_{\omega}) = \rho$ for almost every $\omega\in\Omega$, and obtain two random sets with escape lower than $-\rho$. In particular $\rho$ may be arbitrarily close to $0$, however as we will see later choosing such $\rho$ often results in highly irregular metastable random sets.

\begin{definition}
  A mapping $h:\Omega\to L^1(X,\Borel,m)$ is said to be a \emph{random $L^1$-function} if $(\omega, x)\mapsto h(\omega, x)$ is $(\F\otimes \Borel, \Borel(\mathbb{R}))$-measurable. If each $h_{\omega}$ is a density in $L^1(X,\Borel,m)$, it is called a \emph{random density}. Such a density is said to be \emph{preserved} by a Perron-Frobenius operator cocycle $\p$ if $\p_{\omega}h_{\omega} = h_{\base\omega}$ for almost every $\omega\in\Omega$.
\end{definition}

\begin{lemma}\label{lem:FullSpectrum}
  Let $\p:\mathbb{N}\times \Omega \to \End(L^1(X,\Borel, m))$ be a Perron-Frobenius operator cocycle (of a measurable map cocycle $T$) over $(\Omega,\F,\prob,\base)$ that preserves a positive random density $h:\Omega \to L^{1}(X, \Borel, m)$. Suppose that there exists a random function $g:\Omega\to L^{1}(X,\Borel, m)$ so that $\p_{\omega}g_{\omega} = 0$ for almost all $\omega \in \Omega$. Then for every $\rho\in[-\infty,0]$ there exists a random function $f:\Omega \to L^1(X, \Borel, m)$ such that $\lambda(\omega, f_{\omega}) = \rho$ for almost every $\omega\in\Omega$.
\end{lemma}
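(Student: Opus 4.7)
For $\rho=0$, take $f_\omega:=h_\omega$: preservation of the random density gives $\p_\omega^{(n)}h_\omega=h_{\base^n\omega}$, which has $L^1$-norm $1$ for every $n$ since $h$ is a density, hence $\lambda(\omega,h_\omega)=\lim_n \frac{1}{n}\log 1=0$. For $\rho=-\infty$, take $f_\omega:=g_\omega$: then $\p_\omega g_\omega=0$ yields $\p_\omega^{(n)}g_\omega=0$ for all $n\geq 1$, giving $\lambda(\omega,g_\omega)=-\infty$ under the convention $\log 0=-\infty$. These two cases handle the endpoints.

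For intermediate $\rho\in(-\infty,0)$, a naive linear combination $f_\omega=ah_\omega+bg_\omega$ fails: one computes $\p_\omega^{(n)}(ah_\omega+bg_\omega)=ah_{\base^n\omega}$ for $n\geq 1$, whose $L^1$-norm is the constant $|a|$, yielding only the exponents $0$ or $-\infty$. My plan is instead to build $f_\omega$ as an infinite series
\[
  f_\omega=\sum_{k=0}^{\infty}e^{k\rho}\,F_k(\omega),
\]
where the random functions $F_k:\Omega\to L^\infty$ are selected so that $\p_\omega^{(k)}F_k(\omega)=g_{\base^k\omega}$ and the norms $\|\p_\omega^{(n)}F_k(\omega)\|_{L^1}$ are uniformly bounded for $0\leq n\leq k$. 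Since then $\p_\omega^{(k+1)}F_k(\omega)=0$, applying $\p_\omega^{(n)}$ termwise gives
\[
  \p_\omega^{(n)}f_\omega=e^{n\rho}g_{\base^n\omega}+\sum_{k>n}e^{k\rho}\,\p_\omega^{(n)}F_k(\omega).
\]
A geometric-tail estimate yields the upper bound $\|\p_\omega^{(n)}f_\omega\|_{L^1}\leq C\,e^{n\rho}$, and the matching lower bound $\|\p_\omega^{(n)}f_\omega\|_{L^1}\geq c\,e^{n\rho}$ follows by isolating the leading term, provided $\|g_{\base^n\omega}\|_{L^1}$ stays bounded away from zero --- which, assuming $g$ is not essentially zero, holds almost surely along the $\base$-orbit by ergodicity of $\base$. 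Taking $\limsup\frac{1}{n}\log$ then gives $\lambda(\omega,f_\omega)=\rho$.

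The hard and most crucial step is the construction of the $F_k$: one must produce measurable $L^\infty$-valued random functions whose $k$-fold iterate under the Perron-Frobenius cocycle equals the prescribed kernel element $g_{\base^k\omega}$. This is a measurable-selection problem for iterated preimages of $g$ under $\tilde{\p}$, which requires both surjectivity of the relevant operators and a bounded-inverse estimate to control $L^\infty$-norms uniformly along the $\base$-orbit of $\omega$; in a fully general setting this may require additional structural hypotheses on the cocycle beyond what is stated. Once the $F_k$ are in hand, measurability of $\omega\mapsto f_\omega$ follows from measurability of the $F_k$ together with uniform convergence of the series in $L^\infty$.
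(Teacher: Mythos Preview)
Your series ansatz is the correct skeleton, but the proof has a genuine gap precisely where you flag it: you do not construct the $F_k$, and you suggest this may require hypotheses beyond those stated. It does not. The missing idea is that the positive invariant density $h$, which you use only for the endpoint $\rho=0$, is exactly the tool that produces the preimages. The Perron--Frobenius/Koopman duality $\p_\omega\bigl((\phi\circ T_\omega)\cdot\psi\bigr)=\phi\cdot\p_\omega\psi$ (for $\phi\in L^\infty$, $\psi\in L^1$) together with $\p_\omega^{(k)}h_\omega=h_{\base^k\omega}$ gives an explicit solution:
\[
  F_k(\omega)\;=\;\bigl(g_{\base^k\omega}/h_{\base^k\omega}\bigr)\circ T_\omega^{(k)}\cdot h_\omega,
  \qquad
  \p_\omega^{(k)}F_k(\omega)=g_{\base^k\omega}.
\]
No measurable selection or surjectivity argument is needed; the formula is canonical and manifestly measurable in $\omega$.

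With this $F_k$ the paper's argument is also cleaner than your term-by-term estimate. Summing the series one checks directly that $\p_\omega f_\omega=e^{\rho}f_{\base\omega}$, hence $\p_\omega^{(n)}f_\omega=e^{n\rho}f_{\base^n\omega}$, so the entire question reduces to showing $\limsup_n\frac{1}{n}\log\|f_{\base^n\omega}\|_{L^1}=0$. This is handled by Poincar\'e recurrence: for small $\epsilon>0$ the set $\{\|f_\omega\|_{L^1}\ge\epsilon\}$ has positive measure, so the orbit returns to it along a subsequence. Your proposed lower bound, by contrast, asserts that $\|g_{\base^n\omega}\|_{L^1}$ ``stays bounded away from zero\ldots by ergodicity''; ergodicity gives no such uniform bound, only recurrence, and even with recurrence your isolation of the leading term $e^{n\rho}g_{\base^n\omega}$ against the tail $\sum_{k>n}e^{k\rho}\p_\omega^{(n)}F_k(\omega)$ is problematic, since the tail is of the \emph{same} order $e^{n\rho}$ and may cancel the leading term in $L^1$. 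The eigenfunction relation sidesteps this entirely.
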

\begin{proof}
  We modify an argument from Baladi \cite{Bal00} (Theorem 1.5 (7)). Define $f$ so that $f_{\omega} := \sum_{n=0}^{\infty} e^{\rho n}(g_{\base^n\omega}/h_{\base^n\omega})\circ {T}^{(n)}_{\omega} \cdot h_{\omega}$ for every $\omega\in\Omega$. Note that $f$ is also a random $L^1$-function. For any $B\in\Borel$ we have
	\begin{align*}
		\int_{B} \p_{\omega} f_{\omega} \ dm &= \int_{{T}_{\omega}^{-1}B} f_{\omega} \ dm\\
 &=\int_{{T}_{\omega}^{-1}B} \sum_{n=0}^{\infty} e^{\rho n}(g_{\base^n\omega}/h_{\base^n\omega})\circ {T}_{\omega}^{(n)}\cdot h_{\omega} \ dm\\
		&= \int_{{T}_{\omega}^{-1}B} g_{\omega} \ dm + \int_{{T}_{\omega}^{-1}B} \sum_{n=1}^{\infty} e^{\rho n}(g_{\base^n\omega}/h_{\base^n\omega})\circ {T}_{\omega}^{(n)}\cdot h_{\omega} \ dm\\
		&= 0 + e^{\rho} \int_{{T}_{\omega}^{-1}B} \sum_{n=0}^{\infty} e^{\rho n}(g_{\base^{n+1}\omega}/h_{\base^{n+1}\omega}) \circ {T}_{\omega}^{(n+1)}\cdot h_{\omega} \ dm\\
		&= e^{\rho} \int_{B}\sum_{n=0}^{\infty} e^{\rho n} (g_{\base^{n+1}\omega}/h_{\base^{n+1}\omega})\circ {T}_{\base\omega}^{(n)}\cdot h_{\base\omega}\ dm\\
		&=e^{\rho}\int_{B} f_{\base\omega}.
	\end{align*}
	Thus $\p_{\omega} f_{\omega} = e^{\rho}f_{\base\omega}$ almost everywhere. Now for $\epsilon > 0$ let $\Omega_{\epsilon} := \{\omega \in \Omega\ :\ \norm{f_{\omega}}_{L^1} \ge \epsilon \}$. Since $\omega\mapsto\norm{f_{\omega}}_{L^1}$ is measurable, the set $\Omega_{\epsilon}$ is also measurable. Fix $\epsilon$ sufficiently small so that $\prob(\Omega_{\epsilon}) > 0$. The Poincar\'e Recurrence Theorem asserts that $\prob$-almost surely there is a sequence $m_{k}\uparrow\infty$ such that $\base^{m_{k}}\omega \in \Omega_{\epsilon}$. Hence
	\begin{displaymath}
		0 \ge \limsup_{n\to\infty}\frac{1}{n}\log\norm{f_{\base^n\omega}}_{L^1}\ge \limsup_{k\to\infty}\frac{1}{m_{k}}\log\norm{f_{\base^{m_k}\omega}}_{L^1}\ge 0,
	\end{displaymath}
	from which we obtain
	\begin{align*}
		\lambda(\omega, f) = \limsup_{n\to\infty}\frac{1}{n}\log\norm{\p_{\omega}^{(n)}f_{\omega}}_{L^1} &= \limsup_{n\to\infty}\frac{1}{n}\log e^{\rho n}\norm{f_{\base^{n}\omega}}_{L^1}\\
                &= \rho + \limsup_{n\to\infty}\frac{1}{n}\log\norm{f_{\base^n\omega}}_{L^1} = \rho.
	\end{align*}
\end{proof}

It is clear that the set-valued mappings $A_{\pm}:\Omega \to \Borel$ defined by $A_{\pm}(\omega) := \{\pm f_{\omega} > 0\}$ obtained from a random function $f$ are indeed random sets. Thus an application of Theorem \ref{thm:main} to $f$ in Lemma \ref{lem:FullSpectrum} implies that for any negative $\rho$, arbitrarily close to zero, there exist complementary random sets whose rate of escape is slower than $-\rho$.

\begin{example}\label{ex:FullSpectrum}
  Let $(\Omega, \F, \prob, \base)$ be the full two-sided 2-shift on $\{0,1\}$ equipped with the $\sigma$-algebra $\F$ generated by cylinders, and with Bernoulli probability measure $\prob$. Let $\tilde T:\Omega\to\End([0,1])$ be the generator of a cocycle ${T}$, constant on cylinders, given by $\tilde T(\omega) := T_{\omega_0}$ where $T_0(x) := 2x + \alpha_0\mod 1$ and $T_1(x) = 2x + \alpha_1\mod 1$, for some $\alpha_0, \alpha_1\in\mathbb{R}$. It is easy to check that the corresponding Perron-Frobenius operator cocycle ${\p}$ satisfies Lemma \ref{lem:FullSpectrum} with $h_{\omega} \equiv 1$ for all $\omega$ and $g_\omega = g_{\omega_0}$ where
	\begin{displaymath}
		g_i(x) =
		\begin{cases}
			-1/2 & \text{if } 0 \le x-\alpha_i \mod 1\le 1/2\\
			1/2 & \text{if } 1/2  < x-\alpha_i \mod 1 \le 1.
		\end{cases}, \quad i = 0,1.
	\end{displaymath}
        After applying Lemma \ref{lem:FullSpectrum} we conclude that any $\rho\in[-\infty, 0]$ is a Lyapunov exponent hence, by Theorem \ref{thm:main}, there exist complementary random sets with arbitrarily low escape rates.

	For a numerical demonstration  we set $\alpha_0 = 0$ and $\alpha_1 = 0.6$. We choose $\omega^*\in\Omega$ such that $\omega^{*}_{i} = 0$ for all $i<0$ and $\omega^*_i$ equals the $(i+1)^{th}$ digit in the fractional part of the binary expansion of $\pi$ for $i\ge 0$. The first few central elements of $\omega^{*}$, with the zeroth element underlined, are:
	\begin{displaymath}
	  \omega^{*} = (\dots, 0 , 0, \underline{0}, 0,1,0,0,1,0,0,0,0,1,1,1,1,1,\dots).
	\end{displaymath}
	
	Numerical approximations of $f_{\omega^{*}}$ for some values of $\rho$ are shown in Figure \ref{fig:LyapunovFunctions}. For $\rho=-1$ applying the construction in Theorem \ref{thm:main} we see from the graph of $f_{\omega^{*}}$ that $\rset_{-}(\omega^{*}) = [0,1/2)$ and $\rset_{+}(\omega^{*}) = [1/2,1]$. As $\rho$ becomes closer to $0$ we can see more oscillations in $f_{\omega^{*}}$ and subsequently higher disconnectedness of the corresponding sets $\rset_{\pm}(\omega^*)$.
\begin{figure}[hbt]
  \begin{center}
\psfrag{x}{\small $x$}
\psfrag{fomega}{\small $f_{\omega^{*}}(x)$}
\psfrag{rhonegone}{\small $\rho=-1$}
\psfrag{rhonegptfour}{\small $\rho=-0.4$}
\psfrag{rhonegptone}{\small $\rho=-0.1$}
\includegraphics[width=13cm]{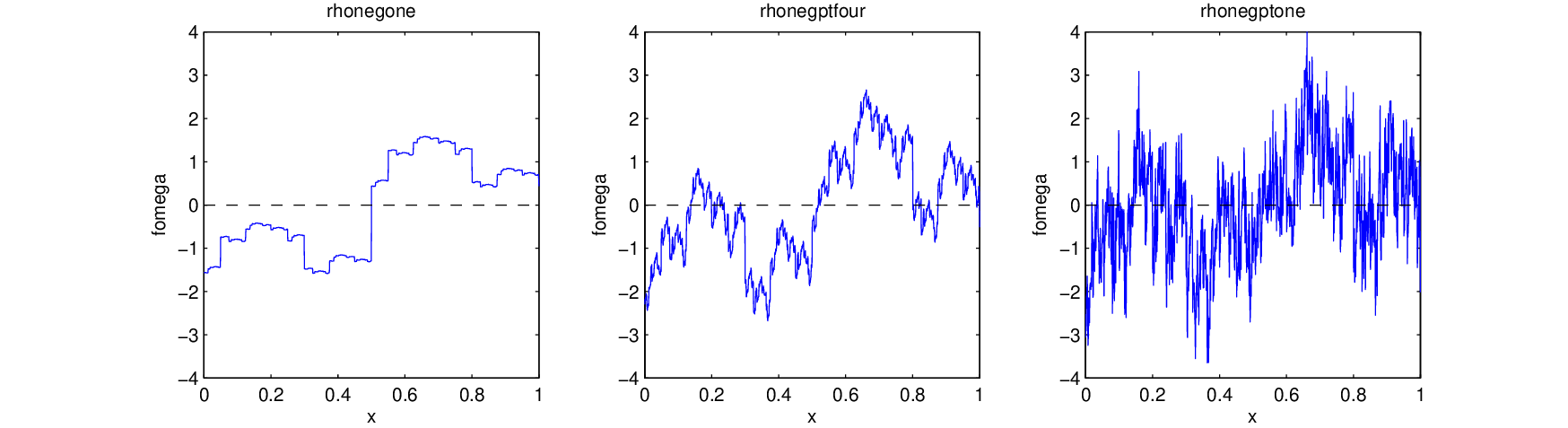}
\caption{Graphs of $f_{\omega^{*}}$ corresponding to different Lyapunov exponents in Example \ref{ex:FullSpectrum}. Note the increased irregularity as $\rho$ approaches zero.}
\label{fig:LyapunovFunctions}
  \end{center}
\end{figure}
\end{example}

\section{Oseledets splittings and applications}\label{sec:oseledets}
In this section we extend Theorem \ref{thm:main} to apply in a Banach space $(Y, \norm{\cdot}_{Y})$, with $Y\subset L^1(X)$, in which the Perron-Frobenius cocycle admits an \emph{Oseledets splitting}. We then apply these new results to expanding maps of the unit interval, $I=[0,1]$, where $Y$ is taken to be the space of functions of bounded variation $\bv$.

\begin{definition}[\cite{Thi87, FLQ10}]
  A linear operator cocycle ${\p}:\mathbb{N}\times\Omega \to \End(Y,\norm{\cdot}_{Y})$ over $(\Omega,\F,\prob,\base)$ is said to be \emph{quasi-compact} if for almost every $\omega$ there exists an $\alpha<\lambda(\omega)$ such that the set $\mathcal{V}_{\alpha} := \{y\in Y\ :\ \lambda(\omega, y) < \alpha\}$ is finite co-dimensional. We will denote the infimal such $\alpha$ by $\alpha(\omega)$.
\end{definition}

Quasi-compact cocycles have the property that Lyapunov exponents larger than $\alpha(\omega)$ are isolated. For an isolated Lyapunov exponent $r>\alpha(\omega)$, let $\epsilon > 0$ be small enough so that $\Lambda(\omega) \cap (r-\epsilon,r) = \emptyset$. If the co-dimension of $\mathcal{V}_{r-\epsilon}(\omega)$ in $\mathcal{V}_{r}(\omega)$ is $d$ then we call $r$ a Lyapunov exponent of \emph{multiplicity} $d$. There are at most countably many of these and we refer to them as \emph{exceptional Lyapunov exponents}. The \emph{exceptional Lyapunov spectrum} is the set of pairs of exceptional Lyapunov exponents and their multiplicities, $\{ (\lambda_i(\omega), d_i(\omega))\}_{i=1}^{p(\omega)}$. From now on, we retain the assumption that the base system $(\base,\prob)$ is ergodic, which ensures that $\lambda_i$, $d_i$ and $p$ are all constant almost everywhere.

By $\mathcal{G}_{d}(Y)$ and $\mathcal{G}^{c}(Y)$ we will denote the subspaces of the Grassmannian $\mathcal{G}(Y)$ of $Y$ consisting only of subspaces of dimension $d$ and codimension $c$ respectively;  see the Appendix for more details on Grassmannians and their topology.

\begin{definition}[Oseledets splitting \cite{Thi87}]
  A quasi-compact linear operator cocycle $\p:\mathbb{N}\times\Omega\to\End(Y,\norm{\cdot}_{Y})$ over $(\Omega,\F,\prob,\base)$ with exceptional spectrum $\{(\lambda_i, d_i)\}_{i=1}^{p}$, $p\le\infty$, admits a \emph{Lyapunov filtration} over a $\base$-invariant set $\tilde\Omega\subseteq \Omega$ of full measure, if there exists a collection of maps $\{V_i : \Omega \to \mathcal{G}^{c_i}(Y)\}_{i=1}^{p}$, such that for all $\omega\in\tilde\Omega$ and all $i=1,\dots,p$
	\begin{itemize}
		\item[(i)] $Y = V_{1}(\omega) \supset \cdots \supset V_{i}(\omega) \supset V_{i+1}(\omega)$
		\item [(ii)] $\mathcal{V}_{\alpha(\omega)} \subseteq \cap_{i=1}^{p}V_i(\omega)$, with equality if and only if $p$ is infinite;	 
		\item [(iii)] $\p_{\omega} V_{i}(\omega) = V_{i}(\base\omega)$;
		\item [(iv)] $\lambda(\omega, v) = \lim_{n\to\infty}\frac{1}{n} \log\norm{\p_{\omega}^{(n)}v}_{Y} = \lambda_i$ if and only if $v\in V_{i}(\omega)\setminus V_{i+1}(\omega)$. If $p$ is finite, we take $V_{p+1}(\omega) := \mathcal{V}_{\alpha(\omega)}(\omega)$.
	\end{itemize}
        An \emph{Oseledets splitting} for $\p$ is a Lyapunov filtration with an additional family of maps $\{E_{i}: \Omega \to \mathcal{G}_{d_i}(Y)\}_{i=1}^{p}$ such that for all $\omega\in\tilde\Omega$ and $i=1,\dots,p$
	\begin{itemize}
	  \item [(v)] $V_{i}(\omega) = E_{i}(\omega)\oplus V_{i+1}(\omega)$ (with $V_{p+1}(\omega) := \mathcal{V}_{\alpha(\omega)}(\omega)$ for $p<\infty$);
		\item [(vi)] $\p_{\omega}E_{i}(\omega) = E_{i}(\base\omega)$;
		\item [(vii)] $\lambda(\omega, v) = \lambda_i$ if $v\in E_{i}(\omega)\setminus \{0\}$.
	\end{itemize}
\end{definition}

A Lyapunov filtration is measurable if each $V_i:\Omega \to \mathcal{G}^{c_i}(Y)$ is $(\F, \Borel(\mathcal{G}^{c_i}(Y)))$-measurable. An Oseledets splitting is measurable if its Lyapunov filtration is measurable and each of the maps $E_i:\Omega\to \mathcal{G}_{d_i}(Y)$ is measurable. For more details on measurability we refer the reader to the appendix.

In order to connect the $Y$-Lyapunov spectrum to escape rates, we first need to relate the $Y$-Lyapunov exponents to the $L^1$-Lyapunov exponents used in Theorem \ref{thm:main}. For this we shall require a certain relation between the two norms.

\begin{theorem}\label{thm:equiv_norms}
  Let ${\p}: \mathbb{N}\times\Omega \to \End(Y,\norm{\cdot}_Y)$ be a quasi-compact linear operator cocycle over $(\Omega, \mathcal{F}, \prob, \base)$ with exceptional spectrum $\{(\lambda_i, d_i)\}_{i=1}^{p}$ and a measurable Oseledets splitting $\{E_i\}_{i=1}^{p}$ on $\tilde\Omega$. Let $\norm{\cdot}_{*}$ be a second norm on $Y$ such that $\norm{\cdot}_{*} \le C\norm{\cdot}_{Y}$ for some $C>0$. Then for almost any $\omega\in\tilde\Omega$, $i\in\{1,\dots,p\}$ and any $f\in E_{i}(\omega)$, we have $\lambda_{\norm{\cdot}_{*}}(\omega, f) = \lambda_{\norm{\cdot}_{Y}}(\omega, f) = \lambda_i$; that is, the Lyapunov exponents with respect to the two norms are equal almost everywhere.
\end{theorem}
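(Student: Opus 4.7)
The plan is to prove the two inequalities separately. The direction $\lambda_{\norm{\cdot}_{*}}(\omega,f)\le \lambda_i$ is immediate: since $\norm{\cdot}_{*}\le C\norm{\cdot}_{Y}$, we have $\norm{\p_\omega^{(n)} f}_{*}\le C\norm{\p_\omega^{(n)} f}_{Y}$ for all $n$, so taking logarithms, dividing by $n$, and passing to $\limsup$ yields $\lambda_{\norm{\cdot}_{*}}(\omega,f)\le \lambda_{\norm{\cdot}_{Y}}(\omega,f)$, which equals $\lambda_i$ for $f\in E_i(\omega)\setminus\{0\}$ by property (vii) of the Oseledets splitting. The case $\lambda_i=-\infty$ is then automatic, so from here on I assume $\lambda_i$ is finite.

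For the reverse direction I would quantify the distortion between the two norms on the finite-dimensional fibre $E_i(\omega)$ by setting
\[
K(\omega)\;:=\;\sup_{v\in E_i(\omega)\setminus\{0\}}\frac{\norm{v}_{Y}}{\norm{v}_{*}}.
\]
Because $E_i(\omega)$ is $d_i$-dimensional and any two norms on a finite-dimensional space are equivalent, $K(\omega)<\infty$ for every $\omega\in\tilde\Omega$. Measurability of $K$ follows from the measurability of $E_i:\Omega\to\mathcal{G}_{d_i}(Y)$ together with a measurable basis selection on the Grassmannian, as described in the appendix.

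The key step combines ergodic recurrence with a contradiction. Since $K<\infty$ almost surely, $\prob(\{K\le M\})>0$ for $M$ sufficiently large; Poincar\'e recurrence applied to $\{K\le M\}$, combined with ergodicity of $\base$, implies that the $\base$-invariant set $R_M:=\{\omega:K(\base^n\omega)\le M\text{ for infinitely many }n\}$ has full measure. Now fix $\omega\in R_M\cap\tilde\Omega$ and $f\in E_i(\omega)\setminus\{0\}$, and suppose for contradiction that $\lambda_{\norm{\cdot}_{*}}(\omega,f)<\lambda_i$; choose $\epsilon>0$ with $\norm{\p_\omega^{(n)}f}_{*}\le e^{(\lambda_i-\epsilon)n}$ for all sufficiently large $n$. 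Property (iv) of the splitting applied to $f\in V_i(\omega)\setminus V_{i+1}(\omega)$ gives $\norm{\p_\omega^{(n)}f}_{Y}\ge e^{(\lambda_i-\epsilon/2)n}$ for $n$ large, and since $\p_\omega^{(n)}f\in E_i(\base^n\omega)$ the ratio is controlled by $K$:
\[
K(\base^n\omega)\;\ge\;\frac{\norm{\p_\omega^{(n)}f}_{Y}}{\norm{\p_\omega^{(n)}f}_{*}}\;\ge\;e^{\epsilon n/2}
\]
eventually in $n$, contradicting the recurrence statement $K(\base^n\omega)\le M$ infinitely often. Hence $\lambda_{\norm{\cdot}_{*}}(\omega,f)\ge \lambda_i$, completing the proof.

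The step I expect to be most delicate is the measurability bookkeeping for $K$, which hinges on the Grassmannian/measurable-selection machinery of the appendix. The ergodic part, by contrast, is short and clean; in particular, the argument does not require the stronger ``tempered'' conclusion $\tfrac{1}{n}\log K(\base^n\omega)\to 0$, since any eventual exponential lower bound on $K(\base^n\omega)$ is already incompatible with the recurrence $K(\base^n\omega)\le M$ infinitely often.
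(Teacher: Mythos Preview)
Your proof is correct and follows essentially the same strategy as the paper: define the norm-ratio function $K(\omega)$ (the paper calls it $c(\omega)$) on the finite-dimensional fibres $E_i(\omega)$, establish its measurability, and use Poincar\'e recurrence to find infinitely many times $n$ at which $K(\base^n\omega)$ is bounded, from which the reverse inequality follows (the paper obtains it by a direct $\limsup$ computation along the recurrence subsequence rather than by contradiction, but this is cosmetic). On the measurability step you flagged as delicate, the paper's route differs slightly from what you describe: rather than a measurable basis selection, it shows (Lemma~\ref{lem:usc}) that $\psi(E)=\sup_{\xi\in E}\norm{\xi}_Y/\norm{\xi}_*$ is upper semi-continuous on $\mathcal{G}_{d_i}(Y)$, hence Borel measurable, and then composes with the measurable map $E_i:\Omega\to\mathcal{G}_{d_i}(Y)$.
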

\begin{proof}
  Firstly note that scaling a norm by a constant factor does not change the Lyapunov exponent, hence without loss of generality we may assume that $C=1$. Fix $i\in \{1,\dots,p\}$. Since $\norm{\cdot}_{*}\le \norm{\cdot}_{Y}$ the inequality $\lambda_{\norm{\cdot}_{*}}(\omega, f) \le \lambda_{\norm{\cdot}_{Y}}(\omega,f)$ for all $\omega \in \tilde\Omega$ follows trivially. Now for the reverse inequality: define a function $c:\tilde\Omega\to\R$ by $c(\omega) = \sup_{\xi\in E_i(\omega)}\norm{\xi}_{Y}/\norm{\xi}_{*} = \psi\circ E_i(\omega)$ where $\psi:\mathcal{G}_{d_i} \to \R$ is as in Lemma \ref{lem:usc}. Since $E_i$ is $(\F, \Borel(\mathcal{G}_{d_i}(X)))$-measurable and $\psi$ is $(\Borel(\mathcal{G}_{d_i}(X)), \Borel(\R))$-measurable, their composition $c$ is $(\F, \Borel(\R))$-measurable.

  For a positive integer $N$ let $\mathbbm{1}_{ \{c<N\}}$ be the indicator function of the (measurable) set $\{\omega\ :\ c(\omega) < N\}$. Given any $\omega\in\tilde\Omega$, the function $c(\omega)$ is finite so $\mathbbm{1}_{\{c<N\}}(\omega) = 1$ for all $N > c(\omega)$. Thus $\mathbbm{1}_{\{c<N\}} \to 1$ pointwise. By Lebesgue's Dominated Convergence Theorem, we see that $\prob(\{c<N\}) \to 1$ as $N\to\infty$. Thus we may choose an $N$ large enough so that $\prob(\{c<N\}) > 0$. By Poincar\'e recurrence there almost surely exists a sequence $m_k\uparrow \infty$ such that $\base^{m_k}\omega \in \{c<N\}$. Then
	\begin{displaymath}
	  \lambda_{\norm{\cdot}_{*}}(\omega, f) \ge \limsup_{k\to\infty}\frac{1}{m_k}\log\norm{\p_{\omega}^{(m_k)}f}_{*} \ge \lim_{k\to\infty}\frac{1}{m_k}\log N^{-1}\norm{\p_{\omega}^{(m_k)}f}_{Y} = \lambda_i(\omega),
	\end{displaymath}
which completes the proof.
\end{proof}

\begin{remark}
  By reversing the appropriate inequalities in the proof of Theorem \ref{thm:equiv_norms} and a similar modification of Lemma \ref{lem:usc} one can see that the same result holds when the two norms satisfy the relation $C\norm{\cdot}_{*} \ge \norm{\cdot}_{Y}$ for some $C>0$. In particular Theorem \ref{thm:equiv_norms} is satisfied when the two norms are equivalent.
\end{remark}

A direct consequence of Theorem \ref{thm:main} and Theorem \ref{thm:equiv_norms} is:
\begin{corollary}\label{cor:rychlik}
  Let ${T}:\mathbb{N}\times\Omega \to \End(X,\Borel, m)$ be a measurable map cocycle over $(\Omega, \F, \prob, \base)$ and let its Perron-Frobenius cocycle be ${\p}: \mathbb{N}\times\Omega \to \End(Y,\norm{\cdot}_{Y})$, where $Y\subseteq L^1(X)$ and $\norm{\cdot}_{L^1}\le \norm{\cdot}_{Y}$. Suppose that ${\p}$ is quasi-compact, with exceptional spectrum $\{(\lambda_i,d_i)\}_{i=1}^{p}$, and admits a measurable Oseledets splitting $E_i:\Omega\to\mathcal{G}(X)$. For $\prob$-almost all $\omega^*\in\Omega$ and any $f\in E_i(\omega^*)$ the orbit sets given by $\oset_{\pm}(\vartheta^n\omega^*) = \{\pm \p_{\omega^{*}}^{(n)}f > 0\}$ satisfy $E(\oset_{\pm}, \omega^*) \le -\lambda_i$, $i=2,\dots,p$.
\end{corollary}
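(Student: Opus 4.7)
The plan is to observe that the corollary is essentially a composition of Theorems \ref{thm:main} and \ref{thm:equiv_norms}: the former bounds escape rates in terms of $L^1$-Lyapunov exponents, while the latter identifies those $L^1$-exponents with the $Y$-exponents $\lambda_i$ on the Oseledets spaces $E_i(\omega)$. The hypothesis $\norm{\cdot}_{L^1} \le \norm{\cdot}_{Y}$ is precisely what is needed to invoke Theorem \ref{thm:equiv_norms} with $\norm{\cdot}_{*} = \norm{\cdot}_{L^1}$ and $C=1$, yielding $\lambda_{\norm{\cdot}_{L^1}}(\omega,f) = \lambda_i$ for a.e.\ $\omega \in \tilde\Omega$ and every nonzero $f \in E_i(\omega)$.

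Next I would check the remaining hypotheses of Theorem \ref{thm:main}, namely $f \in L^{\infty}(m)$ and $\lambda(\omega,f) < 0$. The sign condition is automatic in the interesting regime $\lambda_i < 0$; if instead $\lambda_i = 0$, then $-\lambda_i = 0$ and the asserted bound $E(\rset_{\pm}, \omega) \le 0$ holds trivially since escape rates are non-negative by definition. The $L^{\infty}$ condition is the only real subtlety: it is not spelled out in the corollary's hypotheses, but in the intended Rychlik application $Y = \bv([0,1])$ embeds continuously into $L^{\infty}([0,1])$, so every $f \in E_i(\omega) \subset \bv$ is essentially bounded for free.

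With both hypotheses verified, a direct application of Theorem \ref{thm:main} to this $\omega$ and $f$ produces exactly the random sets $\rset_{\pm}$ appearing in the corollary, since both are defined by $\{\pm \p_{\omega}^{(n)} f > 0\}$; the desired bound $E(\rset_{\pm}, \omega) \le -\lambda_{\norm{\cdot}_{L^1}}(\omega,f) = -\lambda_i$ then follows. The main obstacle, such as it is, is arranging the $L^{\infty}$ regularity of $f$: this is delivered automatically by a continuous embedding $Y \hookrightarrow L^{\infty}$, which is immediate for $\bv$ on a compact interval but would need to be imposed as an additional hypothesis for more general choices of $Y$.
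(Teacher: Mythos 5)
Your proposal is correct and takes exactly the route the paper intends: the authors state Corollary \ref{cor:rychlik} as ``a direct consequence of Theorem \ref{thm:main} and Theorem \ref{thm:equiv_norms}'' without writing out the composition, and you have written it out. You also correctly handle the degenerate case $\lambda_i = 0$ (where $E \le 0$ is trivial since escape rates are non-negative by construction), which is a small point the paper does not bother to mention.

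Your flag about the $L^\infty$ hypothesis is a genuinely sharp observation. Theorem \ref{thm:main} requires $f \in L^\infty(m)$, and the corollary's stated hypotheses ($Y \subseteq L^1$, $\norm{\cdot}_{L^1} \le \norm{\cdot}_Y$) do \emph{not} imply $E_i(\omega) \subset L^\infty$ in general. The paper implicitly relies on this and only ever applies the corollary to $Y = \bv([0,1])$, where the continuous embedding $\bv \hookrightarrow L^\infty$ rescues the argument. Strictly speaking, Corollary \ref{cor:rychlik} as printed should carry an additional hypothesis such as $Y \hookrightarrow L^\infty(m)$ (or simply $E_i(\omega) \subset L^\infty(m)$), and you are right to call this out rather than silently assume it. This does not affect the downstream Corollary \ref{cor:rychlik2}, which is the only place the result is used.
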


This result extends the application of Theorem \ref{thm:main} to Perron-Frobenius cocycles on Banach spaces for which the cocycle is quasi-compact and the Banach space norm dominates the $L^1$-norm. Note that our result also applies to periodic $\omega^{*}$ as, in this case, the corresponding Oseledets subspaces $E_{i}(\omega^{*})$ would indeed be eigenspaces.

\subsection{Application to cocycles of expanding interval maps}

We now focus on the unit interval, $I=[0,1]$, one-dimensional map cocycles $T:\mathbb{N}\times \Omega \to \End(I)$, and their Perron-Frobenius operators. In \cite{FLQ10} it is shown that the Perron-Frobenius cocycle is quasi-compact if the \emph{index of compactness} (a quantity corresponding to the essential spectral radius in the deterministic setting) $\kappa:=\lim_{n\to\infty}(1/n)\log(1/\essinf((T_{\omega}^{n})'(x)))<0$.  This formula for $\kappa$ suggests that any Lyapunov spectral points lying between $\kappa$ and $0$ (the latter corresponding to the random invariant density) are associated with large-scale structures responsible for rates of mixing slower than the local expansion of trajectories can account for. These structures are commonly referred to as \emph{coherent structures, coherent sets} or \emph{(random) almost invariant sets} \cite{FLQ09, FLS10}. We apply the results of Corollary \ref{cor:rychlik} to show that these sets also posses a slow rate of escape, bounded by the corresponding exponent in the Lyapunov spectrum.

Let $(I, \Borel, \ell)$ be the unit interval $[0,1]$ with Borel $\sigma$-algebra and Lebesgue measure. Recall that \emph{variation} of a function $f\in L^1(I)$ is given by
\begin{displaymath}
	\var(f) := \inf_{g\sim f}\sup \sum_{i=1}^{k}\abs{g(p_i) - g(p_{i-1})},
\end{displaymath}
where $g\sim  f$ means that $g$ is an $L^1$ version of $f$ and the supremum is taken over all finite sets $\{p_1<p_2<\cdots <p_k\}\subset I$. Define $\bv := \{f\in L^1(I)\ :\ \var(f) < \infty\}$ to be the space of functions of bounded variation, equipped with the norm $\norm{\cdot}_{\bv} := \max\{\norm{\cdot}_{L^1}, \var(\cdot) \}$.

Let $\Omega \subseteq \{1,\dots, k\}^{\mathbb{Z}}$ be a shift space on $k$ symbols with the left shift map $\base:\Omega\circlearrowleft$ given by $(\base\omega)_j = \omega_{j+1}$. Furthermore, suppose $\F$ is the Borel $\sigma$-algebra generated by cylinders in $\Omega$ and suppose that $\prob$ is an ergodic shift-invariant probability measure on $\Omega$.

A \emph{Rychlik map cocycle} is a cocycle $T:\mathbb{N}\times\Omega \to \End(I)$ obtained from a collection of $k$ \emph{Rychlik maps} $\{T_i\}_{i=1}^{k}$ where the generator $\tilde T$ is given by $\tilde T_{\omega} = T_{\omega_0}$. Rychlik maps \cite{Ryc83}, a generalisation of Lasota-Yorke maps, form a large class of almost-everywhere $C^{1}$ maps of the unit interval whose reciprocal of the modulus of the derivative has bounded variation. We will denote the corresponding Perron-Frobenius operator cocycle $\p : \mathbb{N}\times\Omega \to \End(\bv)$. For more details we refer the reader to \cite{FLQ10}.

In Corollary 28 \cite{FLQ10} it is shown that the Perron-Frobenius cocycle of any Rychlik map cocycle that is \emph{expanding-on-average} (that is, $\kappa <0$) admits a $\prob$-continuous (and therefore measurable) Oseledets splitting in $\bv$. We combine this result with Corollary \ref{cor:rychlik} to obtain the following.


\begin{corollary}\label{cor:rychlik2}
  Let $T:\mathbb{N}\times\Omega\to\End(I)$ be a Rychlik map cocycle which is expanding on average and let $\p:\mathbb{N}\times\Omega\to\End(BV)$ be its Perron-Frobenius operator cocycle, which admits a measurable Oseledets splitting on a set of full $\prob$-measure $\tilde\Omega\subseteq \Omega$. For any isolated Lyapunov exponent $\lambda_i<0$ and $\prob$-almost any $\omega^*\in\Omega$ there exist orbit sets $\rset_{\pm}$ such that $\omega$-fibres of $\rset_{\pm}$ partition $I$ and $E(\rset_{\pm},\omega^*) \le -\lambda_i$.
\end{corollary}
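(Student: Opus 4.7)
The plan is to apply Corollary \ref{cor:rychlik} with $Y=\bv(I)$ and $m=\ell$, and then patch the two random sets it produces into an actual partition of $I$. First I would verify the hypotheses of Corollary \ref{cor:rychlik}. From the definition $\norm{\cdot}_{\bv} = \max\{\norm{\cdot}_{L^1},\var(\cdot)\}$, one has $\bv \subseteq L^1(I)$ with $\norm{\cdot}_{L^1} \le \norm{\cdot}_{\bv}$, giving the required norm comparison. The expanding-on-average assumption $\kappa<0$, combined with Corollary 28 of \cite{FLQ10} as cited in the paragraph preceding the statement, yields quasi-compactness of $\p$ on $\bv$ together with a measurable Oseledets splitting $\{E_i\}_{i=1}^{p}$ on a $\base$-invariant set $\tilde\Omega \subseteq \Omega$ of full measure. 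Finally, since functions of bounded variation on a compact interval are essentially bounded, $\bv(I) \hookrightarrow L^\infty(I)$, so any element of $E_i(\omega)\subseteq \bv(I)$ admits a representative in $L^\infty(\ell)$, satisfying the hypothesis on $f$ that Theorem \ref{thm:main} (and hence Corollary \ref{cor:rychlik}) requires.

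Next, for a fixed isolated Lyapunov exponent $\lambda_i<0$ and $\prob$-almost every $\omega\in\tilde\Omega$, I would choose any nonzero $f \in E_i(\omega)$ and invoke Corollary \ref{cor:rychlik} to obtain random sets $\rset_{\pm}(\base^n\omega) = \{\pm \p_\omega^{(n)}f > 0\}$ satisfying $E(\rset_{\pm},\omega) \le -\lambda_i$.

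To promote $\{\rset_+,\rset_-\}$ to a genuine partition of $I$, I would absorb the random zero set $Z(\base^n\omega) := \{\p_\omega^{(n)}f = 0\}$ into $\rset_+$ by redefining $\rset_+(\base^n\omega) := \{\p_\omega^{(n)}f \ge 0\}$, keeping $\rset_-$ unchanged. Measurability is preserved, and since $(\p_\omega^{(n)}f)\mathbbm{1}_{Z(\base^n\omega)} \equiv 0$, the integral identity in Lemma \ref{lem:main} and the chain of inequalities in the proof of Theorem \ref{thm:main} are unaffected; the escape-rate bound $E(\rset_{\pm},\omega) \le -\lambda_i$ therefore persists for the enlarged $\rset_+$. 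There is no real obstacle: the only nontrivial input is the existence of the $\bv$-Oseledets splitting supplied by \cite{FLQ10}, and the rest is a direct specialization of Corollary \ref{cor:rychlik} with a cosmetic adjustment to handle the measure-theoretically negligible zero set.
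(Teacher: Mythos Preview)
Your proposal is correct and follows exactly the paper's approach: note $\norm{\cdot}_{L^1}\le\norm{\cdot}_{\bv}$ and apply Corollary~\ref{cor:rychlik}. Your extra step of absorbing the zero set $\{\p_\omega^{(n)}f=0\}$ into $\rset_+$ to obtain a genuine partition is a detail the paper's proof actually omits; it is justified even more simply than you indicate, since enlarging a random set can only decrease the escape rate in \eqref{eq:escape}.
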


\begin{proof}
  Since $\norm{\cdot}_{L^1} \le \norm{\cdot}_{\bv}$, a direct application of Corollary \ref{cor:rychlik}  shows that any pair of orbit sets $\rset_{\pm}$ satisfying $\rset_{\pm}(\base^n\omega^*) = \{\pm\p_{\omega^*}^{(n)} f > 0\}$ have escape rates lower than $-\lambda_i$.
\end{proof}

Moreover, by an application of Proposition 2.1 \cite{LY78} to $\bv$ functions we see that each $\rset_{\pm}(\omega)$, $\omega \in \{\base^n\omega^{*}\}_{n\in\mathbb{Z}^{+}}$, may be written as a \emph{countable union of closed sets} (including possibly singleton sets). Thus, the orbit sets $\rset_{\pm}(\omega)$, from which we are bounding the rate of escape, have a relatively simple topological form.

\begin{example}\label{ex:rychlik}
  This example is borrowed from \cite{FLQ09} (p746) and we refer the reader to the original article for additional details. It is easy to check that the cocycle $T$ described below is Rychlik and expanding-on-average.
    The base dynamical system is given by a shift $\base$ on sequence space $\Omega = \{\omega \in \{1,\dots, 6\}^{\mathbb{Z}}\ :\ \forall k\in\mathbb{Z}, E_{\omega_k\omega_{k+1}} = 1 \}$
     with transition matrix
    \begin{displaymath}
     E = \left[
     \begin{array}{rrrrrr}
       0 & 1 & 0 & 0 & 1 & 0\\
       0 & 0 & 1 & 0 & 0 & 1\\
       1 & 0 & 0 & 1 & 0 & 0\\
       0 & 0 & 1 & 0 & 0 & 1\\
       1 & 0 & 0 & 1 & 0 & 0\\
       0 & 1 & 0 & 0 & 1 & 0
     \end{array}
     \right],
    \end{displaymath}
    equipped with the $\sigma$-algebra generated by $1$-cylinders and the Markov probability measure $\prob$ determined by the stochastic matrix $\frac{1}{2}E$.

    The map cocycle ${T}$ is generated by maps $\tilde T:\Omega \to \End(I, \Borel, \ell)$ given by $\tilde T(\omega) = T_{\omega_0}$ where $\{T_i\}_{i=1}^{6}$ is a collection of six Lebesgue-preserving, piecewise affine, Markov expanding maps of the interval, which share a common Markov partition (see Figure \ref{fig:FLQ09}).
\begin{figure}[hbt]
  \psfrag{T1}{\small$T_1$}
  \psfrag{T2}{\small$T_2$}
  \psfrag{T3}{\small$T_3$}
  \psfrag{T4}{\small$T_4$}
  \psfrag{T5}{\small$T_5$}
  \psfrag{T6}{\small$T_6$}
  \begin{center}
    \includegraphics[width=11cm]{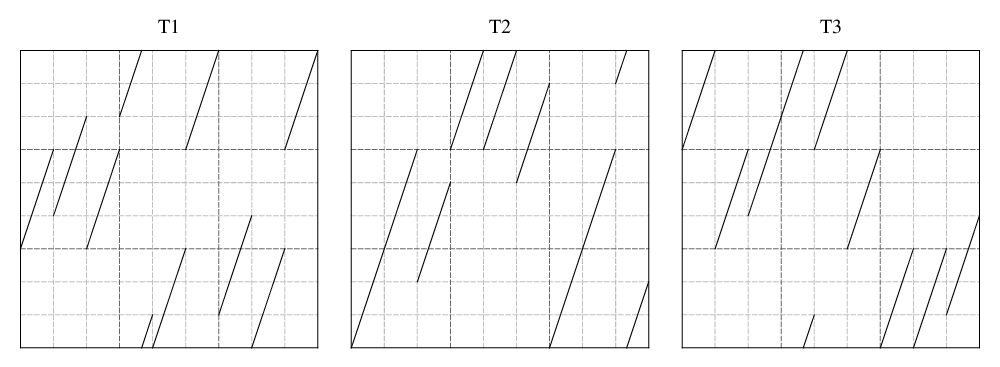}
    \includegraphics[width=11cm]{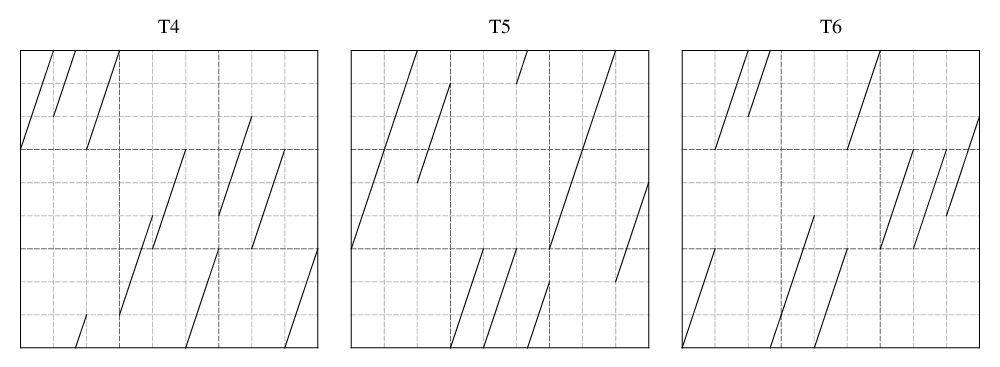}
  \end{center}
 \caption{Graphs of maps $T_{1}, \dots, T_{6}$, reproduced from \cite{FLQ09} (Figure 1).}
  \label{fig:FLQ09}
\end{figure}
The map cocycle $T$ has been designed so that at each step, a particular (random) interval of length 1/3 (selected from $[0,1/3],[1/3,2/3]$ and $[2/3,1]$) is approximately shuffled (with some escape) to another of these three intervals.
For example, the map $T_1$ approximately shuffles $[0,1/3]$ to $[1/3,2/3]$. These particular random intervals are the metastable sets or coherent sets for this random system from which we show the escape rate is slow.

\begin{figure}[hbt]
  \psfrag{fomega}{\small$f_{\omega^{*}}$}
  \psfrag{fomega1}{\small$f_{\base\omega^{*}}$}
  \psfrag{fomega2}{\small$f_{\base^2\omega^{*}}$}
  \psfrag{fomega3}{\small$f_{\base^3\omega^{*}}$}
  \psfrag{fomega4}{\small$f_{\base^4\omega^{*}}$}
  \psfrag{fomega5}{\small$f_{\base^5\omega^{*}}$}
  \psfrag{fomega6}{\small$f_{\base^6\omega^{*}}$}
  \psfrag{fomega7}{\small$f_{\base^7\omega^{*}}$}
  \begin{center}
    \includegraphics[width=13cm]{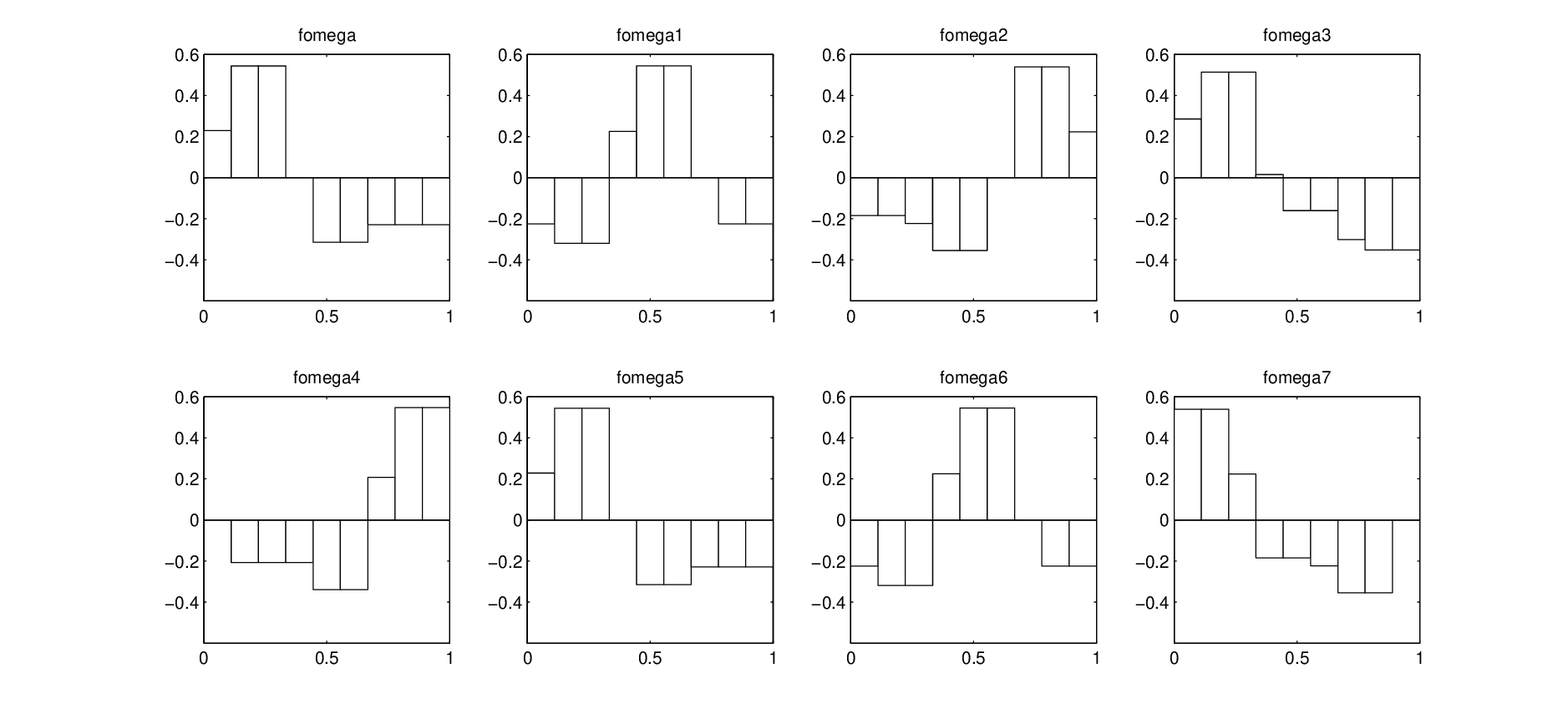}
  \end{center}
\caption{Functions $f_{\base^i\omega^*}$, spanning second Oseledets subspaces $E_{2}(\base^i\omega^*)$ for $i = 0,\dots,7$.}
  \label{fig:RychlikSubspaces}
\end{figure}

A test sequence $\omega^*\in\Omega$ is obtained in the following way. Let $\alpha \in \{0,1\}^{\mathbb{Z}}$ be such that $\alpha_{0} = 0$, $\alpha_i$ is the $(2i)^{th}$ digit in the binary expansion of the fractional part of $\pi$ while $\alpha_{-i}$ is the $(2i-1)^{th}$ digit of the same expansion, $i\ge 1$. Let $h:\Omega \to \{0,1\}^{\mathbb{Z}}$ be such that
\begin{displaymath}
  h(\omega)_i =
  \begin{cases}
    0 & \textnormal{ if } \omega_i \in\{1,2,3\}\\
    1 & \textnormal{ if } \omega_i \in\{4,5,6\}.
  \end{cases}
\end{displaymath}

Observe that $h$ is three-to-one and that we may uniquely choose $\omega^* \in h^{-1}\{\alpha\}$ that satisfies $\omega^*_0 = 1$. Shown below are some of the central elements of $\omega^{*}$, with the zeroth element underlined:

\begin{displaymath}
\omega^{*} = (\dots,3,4,6,5, 4,3,4,6,5,1,2,3,4,3,\underline{1},2,3,4,3,1,5,4,6,5,1,2,3,1,2,\dots).
\end{displaymath}

It is shown in \cite{FLQ09} that $\Lambda(\omega^*) \subset [-\infty, \log1/3] \cup \{\lambda_2(\omega^*)\} \cup \{0\}$ where $\lambda_2(\omega^*)\approx \log0.81$, approximated using the algorithm on p745 \cite{FLQ09}. The functions $f_{\base^n\omega^*} = \p_{\omega^*}^{(n)}f_{\omega^{*}}$ spanning the corresponding Oseledets subspaces $E_{2}(\base^n\omega^*)$ are shown in Figure \ref{fig:RychlikSubspaces}. One can see that, when compared to those in Figure \ref{fig:LyapunovFunctions}, these functions are more regular (i.e. lower variation). We also determine the random metastable sets or coherent sets $\rset_{\pm}(\base^n\omega^*) = \{\pm f_{\base^n\omega^*} > 0\}$ for the first eight values on the forward orbit of $\omega^*$:

\begin{displaymath}
  \begin{array}{rlcrl}
    \rset_{+}(\omega^{*}) &= \left[ 0, 3/9 \right],& \quad & \rset_{-}(\omega^{*}) &= \left[ 3/9, 1 \right),\\
    \rset_{+}(\base\omega^{*}) &= \left[3/9, 6/9 \right],& \quad & \rset_{-}(\base\omega^{*}) &= \left[0, 3/9 \right)\cup \left( 6/9,1 \right],\\
    \rset_{+}(\base^2\omega^{*}) &= \left[ 6/9, 1 \right],& \quad & \rset_{-}(\base^2\omega^{*}) &= \left[0, 6/9 \right),\\
    \rset_{+}(\base^3\omega^{*}) &= \left[0, 4/9 \right],& \quad & \rset_{-}(\base^3\omega^{*}) &= \left(4/9, 1 \right],\\
    \rset_{+}(\base^4\omega^{*}) &= \left[ 6/9, 1 \right],& \quad & \rset_{-}(\base^4\omega^{*}) &= \left[0, 6/9 \right),\\
    \rset_{+}(\base^5\omega^{*}) &= \left[ 0, 3/9 \right],& \quad & \rset_{-}(\base^5\omega^{*}) &= \left[ 3/9, 1 \right),\\
    \rset_{+}(\base^6\omega^{*}) &= \left[3/9, 6/9 \right],& \quad & \rset_{-}(\base^6\omega^{*}) &= \left[0, 3/9 \right)\cup \left( 6/9,1 \right],\\
    \rset_{+}(\base^7\omega^{*}) &= \left[ 0, 3/9 \right],& \quad & \rset_{-}(\base^7\omega^{*}) &= \left[ 3/9, 1 \right).
  \end{array}
\end{displaymath}
As per the discussion in Remark \ref{rem:conditional} we can approximate the rates of escape from $\rset_{+}$ and $\rset_{-}$ by computing the largest Lyapunov exponent of the matrix approximations of the corresponding conditional cocycle (we use $N=0$ and $M=20$ for parameters $M$ and $N$ in the algorithm on p745 \cite{FLQ09}). We then find that $E(\rset_{+},\omega^*)\approx -\log 0.83$ and $E(\rset_{-},\omega^*) \approx -\log0.89$. This is in agreement with Corollary \ref{cor:rychlik2} as both escape rates are less than the previously computed $-\lambda_2(\omega^*)\approx-\log0.81$.

By inspecting $T_{\base^k\omega^*}$ we see that $\rset_{+}(\base^k\omega^{*})$ is mostly mapped onto $\rset_{+}(\base^{k+1}\omega^{*})$, $k=0,\ldots,6$.
\emph{This phenomenon is the cause of the slow escape from the random set $A^+$}.
By Corollary \ref{cor:rychlik2}, \emph{the presence of a Lyapunov spectral value close to 0 forces the existence of orbit sets with escape rates slower than that spectral value.}

\end{example}

\section{Random shifts of finite type and bounds on topological entropy}\label{sec:rsft}

In this section we use our machinery to obtain some results on random shifts of finite type that exhibit metastability, extending some of the results of \cite{FS10} to the random shift setting. For a more detailed description of random shifts of finite type see for example \cite{BG95}. We begin by defining random transition matrices, the corresponding random shifts of finite type and some important properties such as aperiodicity. We alter some of our notation to match the notation usually applied to shifts.
Throughout we assume $(\Omega, \F, \prob, \base)$ is an abstract ergodic base dynamical system such that singletons of $\Omega$ are $\F$-measurable.
\begin{definition}
  For any integer $k\ge 2$, a \emph{random transition matrix} is defined to be a measurable $k\times k$ transition-matrix-valued function $\rmatrix: \Omega \to \mathcal{M}_{k\times k}(\{0,1\})$. For $\omega\in\Omega$ and $n\in\mathbb{N}$ write the \emph{matrix cocycle} as $\rmatrix^{(n)}(\omega) := \rmatrix(\omega)\rmatrix(\base\omega)\cdots \rmatrix(\base^{n-1}\omega)$.
\end{definition}
\begin{definition}
  Let $\mathcal{A} = \{1,\dots,k\}$ be an alphabet and $\mathcal{A}^{\mathbb{Z^{+}}}$ be the space of all one-sided $\mathcal{A}$-valued sequences. A random matrix $M: \Omega\to\mathcal{M}_{k\times k}$ defines a subset of $\mathcal{A}^{\mathbb{Z}^{+}}$ for each $\omega\in\Omega$ by
  \begin{displaymath}
    \Sigma_{\rmatrix}(\omega) := \left\{ x\in\mathcal{A}^{\mathbb{Z}^+} \ : \ \rmatrix_{x_ix_{i+1}}(\base^i\omega) = 1 \text{ for all } i\in \mathbb{Z}^{+} \right\}.
  \end{displaymath}
  Let $\sigma$ be the left shift map on $\mathcal{A}^{\mathbb{Z}^{+}}$. Then $\tau:\{(\omega, \Sigma_{\rmatrix}(\omega))~:~\omega\in\Omega\}\circlearrowleft$ defined by $\tau(\omega, x) := (\base\omega, \sigma x)$ is a skew-product. We shall refer to the bundle random dynamical system determined by the family of mappings $\{\sigma : \Sigma_{\rmatrix}(\omega) \to \Sigma_{\rmatrix}(\base\omega), \ \omega\in\Omega\}$ as a \emph{random shift of finite type} and the $\tau$-invariant set $\Sigma_{\rmatrix}:= \{(\omega,\Sigma_{\rmatrix}(\omega)), \omega\in\Omega\}$ as a \emph{random shift space}.
\end{definition}

\begin{definition}
  A random transition matrix $\rmatrix:\Omega\to\mathcal{M}_{k\times k}(\{0,1\})$ is \emph{aperiodic} if for almost every $\omega\in\Omega$ there exists $N=N(\omega)\in\mathbb{N}$ such that $\rmatrix^{(N)}(\omega)>0$. If $N$ is independent of $\omega$ then $\rmatrix$ is said to be \emph{uniformly aperiodic}. We also use the terms `aperiodic' and `uniformly aperiodic' to describe the corresponding random shift space $\Sigma_{\rmatrix}$.
\end{definition}

\begin{proposition}
	Let $\blocks_n(\omega) = \{[x_0x_1\dots x_{n-1}] \ : \ \rmatrix_{x_i x_{i+1}}(\base^i\omega) = 1 \ \text{for all } 0\le i < n-2\}$ be the set of all $n$-cylinders of $\Sigma_\rmatrix(\omega)$ beginning at position $0$. The following limit exists and is constant $\prob$-almost everywhere:
	\begin{displaymath}
		h(\Sigma_\rmatrix(\omega)) := \lim_{n\to\infty}  \frac{1}{n} \log \abs{\blocks_n(\omega)}.
	\end{displaymath}
\end{proposition}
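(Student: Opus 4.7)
The plan is to recognize $a_n(\omega) := \log|\blocks_n(\omega)|$ as a subadditive cocycle over the ergodic base $(\Omega,\F,\prob,\base)$ and then apply Kingman's subadditive ergodic theorem. Measurability of $\omega \mapsto |\blocks_n(\omega)|$ follows from that of $\rmatrix$, since one has $|\blocks_n(\omega)| = \mathbf{1}^{T} \rmatrix^{(n-1)}(\omega)\mathbf{1}$ for $n \ge 2$ (and $|\blocks_1(\omega)|=k$), so each $a_n$ is $(\F,\Borel(\overline{\R}))$-measurable.

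The key combinatorial input is the submultiplicativity
\[
  |\blocks_{n+m}(\omega)| \le |\blocks_n(\omega)|\cdot|\blocks_m(\base^n\omega)|.
\]
The argument is that any admissible block $[x_0 x_1\cdots x_{n+m-1}]\in\blocks_{n+m}(\omega)$ splits as the concatenation of a prefix $[x_0\cdots x_{n-1}]\in\blocks_n(\omega)$ and a suffix $[x_n\cdots x_{n+m-1}]\in\blocks_m(\base^n\omega)$; forgetting the single bridging constraint $\rmatrix_{x_{n-1}x_n}(\base^{n-1}\omega)=1$ gives an injection of $\blocks_{n+m}(\omega)$ into the product $\blocks_n(\omega)\times\blocks_m(\base^n\omega)$. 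Taking logarithms yields the subadditive cocycle relation $a_{n+m}(\omega) \le a_n(\omega) + a_m(\base^n\omega)$.

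Next I would invoke Kingman's theorem. The required integrability is trivial since $a_1(\omega) \equiv \log k$ is bounded, so $a_1^{+}\in L^{1}(\prob)$. Hence $(1/n)a_n(\omega)$ converges $\prob$-almost surely to a $\base$-invariant $\overline{\R}$-valued function, and ergodicity of $\prob$ forces this limit to be $\prob$-a.s. constant; this is the desired value $h(\Sigma_\rmatrix(\omega))$.

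The only technical wrinkle to keep in mind is the possibility that $|\blocks_n(\omega)|=0$ for some $\omega$ and $n$, in which case $a_n(\omega)=-\infty$. This is handled by using the extended-real formulation of Kingman's theorem; the resulting almost-sure constant limit may then be $-\infty$, which is consistent with the statement as written. Under the aperiodicity hypothesis introduced immediately afterward this degeneracy does not arise, and $h(\Sigma_\rmatrix(\omega))$ is a finite nonnegative constant bounded above by $\log k$.
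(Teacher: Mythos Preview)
Your proposal is correct and follows essentially the same approach as the paper: establish the submultiplicativity $|\blocks_{n+m}(\omega)| \le |\blocks_n(\omega)|\cdot|\blocks_m(\base^n\omega)|$, apply Kingman's Subadditive Ergodic Theorem, and use ergodicity of $(\base,\prob)$ to conclude the limit is constant almost everywhere. You supply more detail than the paper does (explicit measurability via $|\blocks_n(\omega)| = \mathbf{1}^{T}\rmatrix^{(n-1)}(\omega)\mathbf{1}$, the integrability check $a_1^{+}\in L^{1}(\prob)$, and the handling of the $-\infty$ case), but the argument is the same.
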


\begin{proof}
  Observe that $\abs{\blocks_{n+m}(\omega)} \le \abs{\blocks_n(\omega)} \cdot \abs{\blocks_m(\base^n\omega)}$, thus the sequence $\{\log\abs{\blocks_n(\omega)}\}_{n\in\mathbb{Z}^+}$ is subadditive. From Proposition \ref{prop:blocks} that follows it is easy to see that $|\mathcal{C}_n|$ are measurable. By Kingman's Subadditive Ergodic Theorem (see e.g. \cite{Arn98}) there exists a measurable function $f:\Omega\to \mathbb{R}\cup\{-\infty\}$ such that $\lim_{n}(1/n)\log\abs{\blocks_n(\omega)} = f(\omega)$ and $f\circ \base = f$ almost everywhere. As $(\base, \prob)$ is ergodic, $f$ is constant almost everywhere.
\end{proof}

The quantity $h(\Sigma_\rmatrix(\omega))$ is called the \emph{topological entropy} of $\Sigma_\rmatrix(\omega)$. Denote by $h(\Sigma_\rmatrix)$ the constant where $h(\Sigma_\rmatrix) = h(\Sigma_\rmatrix(\omega))$ almost everywhere.

\begin{proposition}
	$\abs{\blocks_n(\omega)} = \sum_{i,j}\rmatrix_{ij}^{(n-1)}(\omega)$ for every $n\ge 2$.
	\label{prop:blocks}
\end{proposition}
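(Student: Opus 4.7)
The cleanest route is a direct computation from the definition of matrix multiplication, rather than induction. The plan is to write the indicator that a word $[x_0 x_1 \cdots x_{n-1}]$ is admissible as a product of entries of the random transition matrices along the orbit of $\omega$, and then recognise the resulting multi-index sum as an entry of $A^{(n-1)}(\omega)$.

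More concretely, I would first observe that $[x_0 x_1 \cdots x_{n-1}] \in \blocks_n(\omega)$ if and only if $\prod_{i=0}^{n-2} A_{x_i x_{i+1}}(\base^i\omega) = 1$, since each factor lies in $\{0,1\}$ and they must all equal $1$. Thus
\begin{equation*}
\abs{\blocks_n(\omega)} \;=\; \sum_{x_0,\ldots,x_{n-1}=1}^{k}\, \prod_{i=0}^{n-2} A_{x_i x_{i+1}}(\base^i\omega).
\end{equation*}

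Next, by iterating the definition of matrix multiplication, the $(x_0, x_{n-1})$-entry of the product $A^{(n-1)}(\omega) = A(\omega) A(\base\omega) \cdots A(\base^{n-2}\omega)$ is exactly
\begin{equation*}
A^{(n-1)}_{x_0 x_{n-1}}(\omega) \;=\; \sum_{x_1,\ldots,x_{n-2}=1}^{k} \prod_{i=0}^{n-2} A_{x_i x_{i+1}}(\base^i\omega).
\end{equation*}
Summing this identity over the free indices $x_0$ and $x_{n-1}$ and comparing with the previous display yields $\abs{\blocks_n(\omega)} = \sum_{i,j} A^{(n-1)}_{ij}(\omega)$, as required. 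Formally, one could do the intermediate step by induction on $n$ to justify the iterated-multiplication formula, but this is standard and not really the content of the proposition.

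There is no genuine obstacle here; the only mild subtlety is a bookkeeping one, namely aligning the index range $0 \le i \le n-2$ appearing in the admissibility condition with the $n-1$ matrix factors $A(\omega),\ldots,A(\base^{n-2}\omega)$ whose product defines $A^{(n-1)}(\omega)$. The case $n=2$ (a single factor, reducing to $\sum_{i,j} A_{ij}(\omega)$) is worth mentioning explicitly as a sanity check.
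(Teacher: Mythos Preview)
Your proposal is correct and is precisely the standard combinatorial argument that the paper invokes by reference: the paper's own proof simply states that it is ``largely identical to the proof of its deterministic analogue (see for example Proposition 2.2.12 in \cite{LM95}).'' Your write-up spells out exactly that argument in the random setting, with the bookkeeping handled correctly.
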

\begin{proof}
The proof of this result is largely identical to the proof of its deterministic analogue (see for example Proposition 2.2.12 in \cite{LM95}).
\end{proof}

\begin{definition}
	Let $\{\sigma : \Sigma_\rmatrix(\omega) \to \Sigma_\rmatrix(\base\omega) \}$ and $\{ \sigma : \Sigma_\submatrix(\omega) \to \Sigma_\submatrix(\base\omega)\}$ be two random shifts of finite type with common base dynamical system $(\Omega, \mathcal{F}, \prob, \base)$. $\Sigma_\submatrix$ is a \emph{subshift} of $\Sigma_\rmatrix$ if
	\begin{displaymath}
	  (\submatrix_{ij}(\omega) = 1) \Rightarrow (\rmatrix_{ij}(\omega) = 1) \ \text{ for all } i,j \in \mathcal{A},\  \omega \in \Omega.
	\end{displaymath}
\end{definition}

A subshift may not utilise all the symbols of its parent shift for different values of $\omega$. We may think of this as either a subshift whose alphabet, while finite, changes with $\omega$ or as a subshift on all of the alphabet of its parent shift, but possibly containing isolated vertices in the associated adjacency graph.
We now introduce the notion of a \emph{complementary subshift}.

\begin{definition}
  Let $\Sigma_{\rmatrix}$ be a random shift of finite type and let $\Sigma_\submatrix$ be a subshift of $\Sigma_{\rmatrix}$. The \emph{complementary} subshift of $\Sigma_\rmatrix$ to $\Sigma_{\submatrix}$ is the subshift $\Sigma_{\submatrix'}$ whose elements $\submatrix_{ij}'=1$ if and only if $\rmatrix_{ij}=1$ and $\submatrix_{in}=\submatrix_{nj}=0$ for all $n\in\mathcal{A}$.
\end{definition}

We state a recent extension of the classical Oseledets Multiplicative Ergodic Theorem, which guarantees the existence of an Oseledets splitting of $\mathbb{R}^k$ even when the adjacency matrices $\rmatrix(\omega)$ are not invertible. This is the case in many interesting applications, including shifts. The MET is a central piece of machinery which we use to obtain bounds on topological entropy for certain complementary subshifts.
Later we will see that the leading Lyapunov exponent $\lambda_1$ determines the topological entropy of the shift, while the second Lyapunov exponent $\lambda_2$, if close to $\lambda_1$, indicates the presence of metastability and the possibility of forming complementary subshifts with large entropies relative to that of the original shift.

\begin{theorem}\cite[Theorem 4.1, specialized to adjacency matrices under left multiplication]{FLQ09}
	Suppose $(\Omega, \mathcal{F}, \prob, \base)$ is an ergodic base dynamical system and consider a random transition matrix $\rmatrix : \Omega \to \mathcal{M}_{k\times k}(\{0,1\})$. There exists a forward $\base$-invariant full $\prob$-measure subset $\tilde{\Omega} \subset \Omega$, numbers $\lambda_r < \dots < \lambda_2 < \lambda_1$ and dimensions $d_1, \dots, d_r \in \mathbb{N}$ satisfying $\sum_{\ell}d_\ell = k$ such that for all $\omega\in\tilde{\Omega}$:
	\begin{itemize}
		\item[(i)] There exist subspaces $W_\ell(\omega) \subset \mathbb{R}^{k}$, $\ell=1,\dots,r$, $\textnormal{dim}(W_\ell(\omega)) = d_\ell$;
		\item[(ii)] $\mathbb{R}^k = W_1(\omega)\oplus \dots \oplus W_r(\omega)$ for $\omega\in\tilde\Omega$;
		\item[(iii)] $W_\ell(\omega)\rmatrix(\omega) \subseteq W_\ell(\base\omega)$ with equality if $\lambda_\ell > -\infty$;
		\item[(iv)] For $v \in W_\ell(\omega) \setminus \{0\}$ the limit
			$	\lambda(\omega, v) = \lim_{n\to\infty}\frac{1}{n}\log\norm{v\rmatrix^{(n)}(\omega)}_1$
			exists and equals $\lambda_\ell$.
	\end{itemize}
	\label{thm:FLQmatrices}
\end{theorem}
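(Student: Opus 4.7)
The plan is to view this statement as a specialization to finite-dimensional cocycles of $0/1$ matrices of the general random Perron-Frobenius MET of \cite{FLQ09}, so that essentially all of the work is shifted onto verifying hypotheses that become trivial in this setting. Because every $\rmatrix(\omega)$ has entries in $\{0,1\}$, the norm satisfies $\norm{\rmatrix(\omega)}_{1}\le k$, so $\log^{+}\norm{\rmatrix(\omega)}_{1}$ is bounded and hence $\prob$-integrable. Moreover the ambient space is the finite-dimensional $\R^{k}$, so any quasi-compactness/essential-spectral-radius hypothesis is automatic (every endomorphism of $\R^{k}$ is compact). The expensive Banach-space hypotheses of \cite{FLQ09} therefore hold vacuously.

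First I would establish the Lyapunov exponents by subadditive ergodic theory. Applying Kingman's subadditive ergodic theorem to the subadditive sequence $\log\norm{\rmatrix^{(n)}(\omega)}_{1}$ yields existence almost surely of the top exponent $\lambda_{1}=\lim_{n}(1/n)\log\norm{\rmatrix^{(n)}(\omega)}_{1}$, constant a.e.\ by ergodicity of $\base$. Applying Kingman in the same way to $\log\norm{\Lambda^{j}\rmatrix^{(n)}(\omega)}$ (the $j$-th exterior power) for $j=1,\ldots,k$ produces the partial sums $\lambda_{1}+\cdots+\lambda_{j}$, from which one extracts the full list of Lyapunov exponents counted with multiplicity; grouping equal values yields the distinct $\lambda_{1}>\cdots>\lambda_{r}$ and multiplicities $m_{1},\ldots,m_{r}$ summing to $k$, possibly with $\lambda_{r}=-\infty$ on a rank-deficient regime.

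The subspaces $W_{\ell}(\omega)$ I would construct via singular value decompositions $\rmatrix^{(n)}(\omega)=U_{n}\Sigma_{n}V_{n}^{\top}$: the columns of $V_{n}$ corresponding to the block of singular values that decay at rate $\lambda_{\ell}$ converge in the Grassmannian of $\R^{k}$ as $n\to\infty$ to an $m_{\ell}$-dimensional subspace which one identifies with $W_{\ell}(\omega)$. Measurability of $\omega\mapsto W_{\ell}(\omega)$ is inherited from measurability of the SVD factors, and the direct-sum property (ii), the equivariance (iii), and the exact growth rate (iv) then drop out by applying the cocycle identity $\rmatrix^{(n+1)}(\omega)=\rmatrix(\base^{n}\omega)\rmatrix^{(n)}(\omega)$ to the SVD.

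The main obstacle is the subtle distinction between a \emph{splitting} and a \emph{filtration} in the non-invertible case, which is exactly what the clause ``with equality if $\lambda_\ell>-\infty$'' in (iii) signals. Vectors whose forward orbit is eventually killed by the cocycle sit in a $-\infty$-exponent subspace, and for such $W_{\ell}$ only the inclusion $\rmatrix(\omega)W_{\ell}(\omega)\subseteq W_{\ell}(\base\omega)$ is available. The standard remedy is to isolate the ``fast'' subspaces with finite exponents, where the restricted cocycle is eventually injective and one recovers the usual MET invariance as an equality, from the ``kernel tower'' of exponent $-\infty$, where only inclusion is guaranteed. Alternatively, and more cleanly, one simply cites Theorem 4.1 of \cite{FLQ09}, whose proof carries out precisely this dichotomy in the Banach-space setting and whose hypotheses are met for free here.
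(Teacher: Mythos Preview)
The paper does not prove this theorem at all: it is stated purely as a citation of \cite[Theorem~4.1]{FLQ09}, specialized to the adjacency-matrix setting, and the paper proceeds immediately to the next result. Your final paragraph --- ``one simply cites Theorem~4.1 of \cite{FLQ09}, whose proof carries out precisely this dichotomy in the Banach-space setting and whose hypotheses are met for free here'' --- is exactly what the paper does, and your opening paragraph correctly identifies why those hypotheses (integrability of $\log^{+}\norm{\rmatrix(\omega)}$, quasi-compactness) are trivially satisfied for $0/1$ matrices on $\R^{k}$.

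The middle portion of your proposal, sketching a self-contained proof via Kingman's theorem on exterior powers and an SVD construction of the $W_{\ell}(\omega)$, is a genuine alternative route that the paper does not take. It is the classical Raghunathan-style argument and would work here, though the SVD-convergence step you describe actually yields only the \emph{filtration} $V_{\ell}(\omega)=\bigoplus_{j\ge\ell}W_{j}(\omega)$ in the forward-only setting; upgrading to the \emph{splitting} $W_{\ell}(\omega)$ requires either two-sided information about $\base$ (which you have, since $\base$ is assumed invertible in Section~\ref{sec:general}) or the additional argument you allude to in your discussion of the $-\infty$ exponent. So your sketch is on the right track but would need more care at exactly the point you flag as ``the main obstacle.'' In any case, the paper sidesteps all of this by invoking \cite{FLQ09} directly.
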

\begin{lemma}
 Under the hypothesis of Theorem \ref{thm:FLQmatrices}, for all $\omega\in\tilde\Omega$ and all vectors $v>0$ one has $\lambda(\omega,v) = \lambda_1$. If, in addition, $\rmatrix$ is uniformly aperiodic, then for all $\omega\in\tilde\Omega$ and for all vectors $v\ge 0$ one has $\lambda(\omega,v) = \lambda_1$.
	\label{lem:positive_quadrant}
\end{lemma}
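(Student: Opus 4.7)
The plan is to reduce both parts of the lemma to a single key observation: non-negativity of the entries of $\rmatrix^{(n)}(\omega)$ forces the existence of a non-negative vector realising the top Lyapunov exponent $\lambda_1$, and any such vector dominates every strictly positive $v$ up to a constant factor once we multiply on the right by a non-negative matrix.

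For the first part, the upper bound $\lambda(\omega,v)\le \lambda_1$ is immediate from Theorem~\ref{thm:FLQmatrices}. For the matching lower bound, I would pick any $v^{\star}\in W_1(\omega)\setminus\{0\}$, so that $\lambda(\omega,v^{\star})=\lambda_1$ by item~(iv), and decompose $v^{\star}=v^{\star}_{+}-v^{\star}_{-}$ into componentwise non-negative and non-positive parts. Because $\rmatrix^{(n)}(\omega)$ has non-negative entries, the triangle inequality gives
\[
 \norm{v^{\star}\rmatrix^{(n)}(\omega)}_{1} \le \norm{v^{\star}_{+}\rmatrix^{(n)}(\omega)}_{1} + \norm{v^{\star}_{-}\rmatrix^{(n)}(\omega)}_{1},
\]
and taking $\limsup \frac{1}{n}\log$ of both sides (using $\log(a+b)\le \log 2 + \max\{\log a,\log b\}$ and the identity $\limsup \max(a_n,b_n)=\max(\limsup a_n,\limsup b_n)$) yields $\lambda_1\le \max(\lambda(\omega,v^{\star}_{+}),\lambda(\omega,v^{\star}_{-}))$. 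Since both exponents are bounded above by $\lambda_1$, one of the two non-negative parts (call it $u$) must realise $\lambda_1$ exactly.

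Now given any $v>0$, set $C:=\norm{u}_{\infty}/\min_{i}v_{i}<\infty$, so that $u\le C v$ componentwise. Right-multiplication by the non-negative matrix $\rmatrix^{(n)}(\omega)$ preserves componentwise inequalities between non-negative vectors, hence $\norm{u\rmatrix^{(n)}(\omega)}_{1}\le C\norm{v\rmatrix^{(n)}(\omega)}_{1}$, giving $\lambda_{1}=\lambda(\omega,u)\le\lambda(\omega,v)$, as required. For the second part, let $N$ be the uniform aperiodicity constant, so $\rmatrix^{(N)}(\omega)>0$ entrywise almost surely; intersecting with $\tilde\Omega$ we may assume this everywhere. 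For any nonzero $v\ge 0$, the vector $v\rmatrix^{(N)}(\omega)$ is then strictly positive, and since $\tilde\Omega$ is forward $\base$-invariant, $\base^{N}\omega\in\tilde\Omega$. The cocycle identity together with the fact that $N$ is a fixed shift in the limsup gives $\lambda(\omega,v)=\lambda(\base^{N}\omega,v\rmatrix^{(N)}(\omega))$, and applying the first part at $\base^{N}\omega$ to the strictly positive vector $v\rmatrix^{(N)}(\omega)$ yields $\lambda_{1}$.

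The main obstacle is the first part: without additional information such as irreducibility of each $\rmatrix(\omega)$, the Oseledets subspace $W_{1}(\omega)$ is not guaranteed to meet the non-negative cone, so one cannot simply invoke a random Perron--Frobenius theorem to produce a non-negative witness. The positive/negative decomposition, combined with the entrywise non-negativity of $\rmatrix^{(n)}(\omega)$ (which forbids cancellation in the norm after multiplication), is precisely the device that sidesteps this difficulty.
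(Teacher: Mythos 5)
Your proof is correct and reaches the same conclusion by a closely related but not identical route, and the difference is worth spelling out. Both arguments first manufacture a non-negative vector with exponent $\lambda_1$ and then use entrywise domination by a positive vector; but the paper's construction takes $v_1'=|v_1|$ in one step, observing $|v_1 A^{(n)}(\omega)|\le |v_1|A^{(n)}(\omega)$ entrywise so that $\lambda(\omega,|v_1|)\ge\lambda_1$, whereas you decompose $v^\star=v^\star_+-v^\star_-$ and invoke the $\log(a+b)$ and $\limsup\max$ estimates to conclude that one of $v^\star_\pm$ attains $\lambda_1$. These two constructions are essentially the same up to the harmless replacement $|v_1|=v^\star_++v^\star_-$. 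The more substantive difference is in the domination step: the paper sandwiches $cv_1'\le v\le Cv_1'$, which requires $v_1'>0$ and leads it to a case split, routing the case where $v_1'$ has a zero entry through uniform aperiodicity; you instead use the one-sided bound $u\le Cv$, which only needs $u\ge 0$ and $v>0$ and immediately yields $\lambda_1=\lambda(\omega,u)\le\lambda(\omega,v)$. Your version is cleaner and in fact removes a wrinkle in the paper's exposition, since as written the paper's argument for the first assertion appears to invoke aperiodicity whenever $v_1'$ is not strictly positive, even though the first assertion of the lemma does not assume aperiodicity. Your treatment of the second part — push $v\ge 0$ forward by the strictly positive matrix $A^{(N)}(\omega)$, land at $\base^N\omega\in\tilde\Omega$ on a strictly positive vector, and use the cocycle identity to transfer the exponent — coincides with the paper's.
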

\begin{proof}
  Let $v_1\in\mathbb{R}^k$ satisfy $\lambda(\omega,v_1) = \lambda_1$. Since $|v_1|\rmatrix^{(n)}(\omega) \ge |v_1\rmatrix^{(n)}(\omega)|$, we must also have $\lambda(\omega, |v_1|) = \lambda_1$, where the absolute values and the inequalities are taken element-wise.
  Thus, the leading exponent $\lambda_1$ is achieved by a nonnegative vector, namely, $v'_1=|v_1|\ge 0$.


  Suppose first that in fact $v'_1>0$.
  For any $v>0$, there exist positive constants $c$ and $C$ such that $cv'_1 \le v \le Cv'_1$ and therefore for any $n\in\mathbb{N}$ we have $c\norm{v'_1\rmatrix^{(n)}(\omega)}_{1} \le \norm{v \rmatrix^{(n)}(\omega)}_{1} \le C\norm{v'_1\rmatrix^{(n)}(\omega)}_{1}$. We conclude that $\lambda(\omega,v) = \lambda(\omega, v'_1)$ for all positive $v$.
	
  Secondly, we consider the case where $v'_1$ is merely non-negative and non-zero.
    Since $\rmatrix$ is uniformly aperiodic, for every $\omega$ there exists an integer $N$ such that $\rmatrix^{(N)}(\omega)$ is positive and therefore $v'_1\rmatrix^{(N)}(\omega)$ is also positive. Using the argument above for positive vectors and the fact that $\lambda(\omega, v) = \lambda(\base^N\omega,v\rmatrix^{(N)}(\omega))$ we obtain $\lambda(\omega,v) = \lambda(\omega,v'_1)=\lambda_1$ for all $v\ge 0$.
\end{proof}

\begin{corollary}\label{cor:entropy}
	For all $\omega \in \tilde\Omega$ one has $h(\Sigma_\rmatrix(\omega)) = h(\Sigma_\rmatrix) = \lambda_1$.
\end{corollary}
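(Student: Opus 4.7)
The plan is to combine Proposition \ref{prop:blocks} with Lemma \ref{lem:positive_quadrant} applied to the all-ones vector. First, I would rewrite the block count as the $\ell^1$ norm of a matrix-vector product: letting $\mathbf{1}=(1,1,\dots,1)$ be the strictly positive row vector with all entries equal to one, Proposition \ref{prop:blocks} gives
\begin{displaymath}
  |\blocks_n(\omega)| = \sum_{i,j} A^{(n-1)}_{ij}(\omega) = \|\mathbf{1}\, \rmatrix^{(n-1)}(\omega)\|_1,
\end{displaymath}
so that $h(\Sigma_\rmatrix(\omega)) = \lim_{n\to\infty} (1/n)\log \|\mathbf{1}\, \rmatrix^{(n-1)}(\omega)\|_1$.

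Next, I would observe that a harmless shift in the index does not affect the limit: writing $(1/n) = ((n-1)/n)\cdot(1/(n-1))$ and letting $n\to\infty$, the factor $(n-1)/n$ tends to $1$, so
\begin{displaymath}
  h(\Sigma_\rmatrix(\omega)) = \lim_{m\to\infty} \frac{1}{m}\log\|\mathbf{1}\,\rmatrix^{(m)}(\omega)\|_1 = \lambda(\omega, \mathbf{1}),
\end{displaymath}
in the notation of Theorem \ref{thm:FLQmatrices}(iv). Since $\mathbf{1}>0$ and $\omega\in\tilde\Omega$, Lemma \ref{lem:positive_quadrant} (in its first, no-aperiodicity-needed form) immediately gives $\lambda(\omega,\mathbf{1})=\lambda_1$. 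Hence $h(\Sigma_\rmatrix(\omega))=\lambda_1$ for every $\omega\in\tilde\Omega$, and because this common value agrees with the $\prob$-a.e.\ constant from the preceding proposition, also $h(\Sigma_\rmatrix)=\lambda_1$.

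There really is no hard step here; the work has already been done in Lemma \ref{lem:positive_quadrant}. The only thing worth double-checking is the indexing, namely that the $(n-1)$ appearing in Proposition \ref{prop:blocks} does not produce a different exponential growth rate than the $n$ appearing in the Oseledets limit — and this is immediate because the prefactor $(n-1)/n\to 1$. No appeal to aperiodicity is required, since the test vector $\mathbf{1}$ is strictly positive rather than merely non-negative.
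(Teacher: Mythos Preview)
Your proof is correct and follows exactly the same approach as the paper: rewrite $|\blocks_n(\omega)|$ via Proposition~\ref{prop:blocks} as $\|\mathbbm{1}\,\rmatrix^{(n-1)}(\omega)\|_1$ and then apply Lemma~\ref{lem:positive_quadrant} to the strictly positive vector $\mathbbm{1}$. Your explicit handling of the $(n-1)$ versus $n$ index shift and your remark that aperiodicity is unnecessary here are both accurate refinements of what the paper states more tersely.
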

\begin{proof}
  From Proposition \ref{prop:blocks}, clearly $\abs{\blocks_n(\omega)} = \norm{\mathbbm{1}\rmatrix^{(n-1)}(\omega)}_1$, thus $h(\Sigma_\rmatrix(\omega)) = \lambda(\omega,\mathbbm{1})$. By Lemma \ref{lem:positive_quadrant}, this equals $\lambda_1$ for all $\omega\in\tilde\Omega$.
\end{proof}

Our main result in this section is:
\begin{theorem}
  Let $\Sigma_\rmatrix$ be a uniformly aperiodic random shift of finite type with corresponding random adjacency matrix $\rmatrix:\Omega \to \mathcal{M}_{k\times k}(\{0,1\})$. Fix $\omega^*\in\tilde\Omega$. Let $v^{*}\in W_\ell(\omega^*)$ with $\ell>1$.
  Define the sequence of vectors $v : \{\base^n\omega^*\}_{n\in\mathbb{Z}^{+}} \to \mathbb{R}^{k}$ on the forward orbit of $\omega^{*}$ by
  \begin{displaymath}
    v(\base^n\omega^*) := \frac{v^*\rmatrix^{(n)}(\omega^{*})}{\norm{v^{*}\rmatrix^{(n)}(\omega^*)}_{1}}\in W_\ell(\base^n\omega^*)
  \end{displaymath}
  and a sequence of sub-alphabets $\mathcal{A}_{+}$ by $\mathcal{A}_{+}(\base^n\omega^*) := \{i\in\mathcal{A}\ :\ v_i(\base^n\omega^*) > 0 \}$. Suppose $\Sigma_{\submatrix}$ is a subshift of $\Sigma_{\rmatrix}$ such that on the orbit of $\omega^{*}$ the random matrix $\submatrix$ takes the following values:
	\begin{equation}\label{eq:B}
		\submatrix_{ij}(\base^n\omega^*) = \begin{cases}
		  \rmatrix_{ij}(\base^n\omega^*) & \textnormal{if } i\in \mathcal{A}_{+}(\base^n\omega^*)\textnormal{ and } j \in \mathcal{A}_{+}(\base^{n+1}\omega^*)\\
			0  & \textnormal{otherwise}. \end{cases}
	\end{equation}
	Then the topological entropy of $\Sigma_{\submatrix}(\omega^{*})$ is greater than or equal to $\lambda_{\ell}$, that is $h(\Sigma_\submatrix(\omega^*)) \ge \lambda_\ell$. If $\Sigma_{\submatrix'}$ is the complementary subshift to $\Sigma_{B}$ then $h(\Sigma_{\submatrix'}(\omega^*)) \ge \lambda_{\ell}$.
	\label{thm:rsmain}
\end{theorem}

\begin{remark}
  One may always find a random subshift $\Sigma_{\submatrix}$ as required in Theorem \ref{thm:rsmain}. For example, consider $\submatrix$ defined by
  \begin{displaymath}
    \submatrix_{ij}(\omega) =
    \begin{cases}
      \rmatrix_{ij}(\omega) & \textnormal{if } \omega\in\{\base^n\omega^*\}_{n\in\mathbb{Z}^{+}} \textnormal{ and } i\in\mathcal{A}_{+}(\omega),\ j\in\mathcal{A}_{+}(\base\omega)\\
      0 & \textnormal{otherwise}.
    \end{cases}
  \end{displaymath}
  It is then easy to see that the random matrix $\submatrix$ is measurable and that $\Sigma_{\submatrix}$ is a subshift of $\Sigma_{\rmatrix}$.
\end{remark}

\begin{proof}[Proof of Theorem \ref{thm:rsmain}]
  Firstly, we will show by induction that for all $n\ge 1$ and all $i\in\mathcal{A}_{+}(\base^n\omega^{*})$
\begin{equation}
	(v(\omega^*)\rmatrix^{(n)}(\omega^*))_i \le (v(\omega^*)\submatrix^{(n)}(\omega^*))_i.
	\label{eq:induction1}
\end{equation}
Let $v = v^{+} + v^{-}$ denote the decomposition of the vector $v$ into nonnegative and nonpositive parts. Then we have $(v(\omega^*)\rmatrix(\omega^*))_i \le (v(\omega^*)^+\rmatrix(\omega^*))_i = (v(\omega^*)\submatrix(\omega^*))_i$ so \eqref{eq:induction1} holds for $n=1$. Assuming that \eqref{eq:induction1} is true for some $n\ge 1$, we proceed with the inductive step
\begin{eqnarray}
  \nonumber (v(\omega^*)\rmatrix^{(n+1)}(\omega^*))_i&=&  \sum_{j} (v(\omega^*)M^{(n)}(\omega^*))_jM_{ji}(\base^n\omega^*)\\
  \nonumber &\le&  \sum_{j\in\mathcal{A}_{+}(\base^n\omega^*) } (v(\omega^*)M^{(n)}(\omega^*))_jM_{ji}(\base^n\omega^*)\\
  \nonumber &\le&  \sum_{j\in\mathcal{A}_{+}(\base^n\omega^*) } (v(\omega^*)Q^{(n)}(\omega^*))_jM_{ji}(\base^n\omega^*)\quad\mbox{by hypothesis}\\
  \nonumber &=&  \sum_{j\in\mathcal{A}_{+}(\base^n\omega^*) } (v(\omega^*)Q^{(n)}(\omega^*))_jQ_{ji}(\base^n\omega^*)\quad\mbox{as $j\in \mathcal{A}_{+}(\base^n\omega^*), i\in \mathcal{A}_{+}(\base^{n+1}\omega^*)$}\\
  \nonumber &=&(v(\omega^*)Q^{(n+1)}(\omega^*))_i
  %
%
\end{eqnarray}
Thus \eqref{eq:induction1} holds for all $n\ge 1$ and all $i\in\mathcal{A}(\base^n\omega^*)$. Since  both sides of \eqref{eq:induction1} are positive, we have
\begin{displaymath}
	\norm{(v(\omega^*)\rmatrix^{(n)}(\omega^*))^+}_1 \le \norm{v(\omega^*)\submatrix^{(n)}(\omega^*)}_1.
\end{displaymath}
Thus,
\begin{align*}
	\lim_{n\to\infty}\frac{1}{n}\log \norm{v(\omega^*)\rmatrix^{(n)}(\omega^*))^+}_1 &\le \lim_{n\to\infty}\frac{1}{n}\log\norm{v(\omega^*)\submatrix^{(n)}(\omega^*)}_1\\
	&\le \lim \frac{1}{n}\log \norm{\mathbbm{1}\submatrix^{(n)}(\omega^*)}_1	= h(\Sigma_\submatrix(\omega^*)).
\end{align*}
Next we will show that $\lim_n (1/n) \log\norm{v(\omega^*)\rmatrix^{(n)}(\omega^*)}_{1}\le\lim_n (1/n) \log\norm{(v(\omega^*)\rmatrix^{(n)}(\omega^*))^+}_1$ to finally obtain that $\lambda_\ell\le h(\Sigma_\submatrix(\omega^*))$.
We need to have some control over the relative size of the positive and negative parts of $v$ along the orbit of $\omega^*$. To continue the proof of Theorem \ref{thm:rsmain} we first state and prove the following claim.

{\noindent \it Claim: Let $\omega\in\{\base^n\omega^*\}_{n\in\mathbb{Z}^{+}}$ and let $N=N(\omega)$ be smallest integer such that $\rmatrix^{(N)}(\omega) > 0$. Then}
	\begin{equation}
	  \frac{1}{k^N}\le \frac{\norm{v(\omega)^+}_{1}}{\norm{v(\omega)^-}_{1}} \le k^N.
		\label{eq:vector_bound}
	\end{equation}
	\label{lem:bound}

\begin{proof}[Proof of claim:]
  As $\rmatrix$ is a $0-1$ matrix, then $\max_{i,j} \rmatrix_{ij}^{(N)}(\omega) \le k^N$. From the definition of $N$ we also have $\min_{i,j}\rmatrix_{ij}^{(N)}(\omega) \ge 1$. The proof of \eqref{eq:vector_bound} is by contradiction. Suppose that $\norm{v(\omega)^+}_{1} > k^N\norm{v(\omega)^-}_{1}$. Then for every $i \in \mathcal{A}$ we have
	\begin{align*}
	  v_{i}(\base^N\omega)\norm{v(\omega)\rmatrix^{(N)}(\omega)}_{1} &= (v(\omega)\rmatrix^{(N)}(\omega))_i\\
	  &= (v(\omega)^+\rmatrix^{(N)}(\omega) + v(\omega)^-\rmatrix^{(N)}(\omega))_i\\
		&= \sum_j (v(\omega)^+)_j\rmatrix^{(N)}_{ji}(\omega) + \sum_j (v(\omega)^-)_j\rmatrix^{(N)}_{ji}(\omega)\\
		& \ge \norm{v(\omega)^+}_{1} - k^N\norm{v(\omega)^-}_{1}\\
		& > k^N\norm{v(\omega)^-}_{1} - k^N\norm{v(\omega)^-}_{1} = 0.
	\end{align*}
        Therefore $v(\base^{N}\omega) \in W_{\ell}(\base^N\omega)$ is a positive vector, but this is a contradiction because the Lyapunov exponent of any positive vector equals $\lambda_1 \neq \lambda_\ell$.
	The inequality $1/k^N \le \norm{v(\omega)^+}_{1}/\norm{v(\omega)^-}_{1}$ is proven similarly.
\end{proof}

We continue the proof of the theorem as follows
\begin{eqnarray*}
	\lambda_\ell &=& \lim_{n\to\infty} \frac{1}{n} \log\norm{(v(\omega^*)\rmatrix^{(n)}(\omega^*))}_1\\
	&=&\lim_{n\to\infty} \frac{1}{n} \log\left(\norm{(v(\omega^*)\rmatrix^{(n)}(\omega^*))^+}_1+\norm{(v(\omega^*)\rmatrix^{(n)}(\omega^*))^-}_1\right)\\
	&\le& \lim_{n\to\infty}\frac{1}{n} \log((1+k^N)\norm{(v(\omega^*)\rmatrix^{(n)}(\omega^*))^+}_1)\\
	&=& \lim_{n\to\infty} \frac{1}{n} \log\norm{(v(\omega^*)\rmatrix^{(n)}(\omega^*))^+}_1.
\end{eqnarray*}
\end{proof}
Theorem \ref{thm:rsmain} may be used to decompose a metastable random shift space into two complementary random subshifts, each with large topological entropy along the orbit of a chosen $\omega^{*}\in\Omega$. One chooses $v^*= v(\omega^{*})\in W_{2}(\omega^*)$ corresponding to the second largest Lyapunov exponent $\lambda_2$ and then partitions according to the positive and negative parts of the push-forwards of $v$ by the random matrix $\rmatrix$. We illustrate this with the following example.

\begin{example}
  Let $\Omega = \{0,1\}^{\mathbb{Z}}$ and let $\base:\Omega\circlearrowleft$ be the full two sided shift on two symbols. Consider the random matrix $\rmatrix : \Omega \to \mathcal{M}_{4\times 4}$ given by $\rmatrix(\omega) = \rmatrix_{\omega_{0}}$ where
  \begin{displaymath}
    \rmatrix_{0} = \left[
\begin{array}{rrrr}
0  &  1  &  0  &  0\\
     1  &  1  &  0  &  1\\
     0  &  0  &  1  &  1\\
     1  &  0  &  1  &  0
\end{array}
\right] \quad \text{and }
\rmatrix_{1} = \left[
\begin{array}{rrrr}
1  &  1  &  0  &  0\\
     1  &  1  &  1  &  0\\
     0  &  1  &  1  &  1\\
     0  &  0  &  1  &  1
\end{array}
\right].
  \end{displaymath}
  We consider a generic point $\omega^*\in\tilde\Omega$ where $\omega^*_{i}$ is the $(20+i)^{th}$ digit of the fractional part of the binary expansion of $\pi$ for $i>-20$ and ($\omega^*_{i} = 0$ for $i\le-20$). The first few elements of $\omega^{*}$, with the zeroth element underlined, are given below:

  \begin{displaymath}
     \omega^{*} = (\dots,1,0,0,0,1,0,0,0,0,1,\underline{0},1,1,0,1,0,0,0,1,1,0,\dots).
  \end{displaymath}

  Using the algorithm on p745 \cite{FLQ09} (with $M = 2N = 40$) we approximate the largest Lyapunov exponent $\lambda_1(\omega^*)\approx\log2.20$, which equals the orbit topological entropy of the random shift, that is $h(\Sigma_{\rmatrix}(\omega^*)) \approx \log 2.20$. The second Lyapunov exponent along the forward orbit of $\omega^{*}$ is $\lambda_2(\omega^{*}) \approx \log 1.21$. Thus, by Theorem \ref{thm:rsmain} we can decompose the shift $\Sigma_\rmatrix$ into two complementary subshifts, each with topological entropy along the orbit of $\omega^{*}$ larger than $\log 1.21$.  Moreover, the decomposition is given by the Oseledets subspaces for $\lambda_2(\omega^{*})$. These Oseledets subspaces are spans of the vectors $\{v(\omega^{*}), v(\base\omega^{*}), v(\base^2\omega^{*}),\dots \}$, whose graphs are shown in Figure \ref{fig:Subspaces}.
  The sub-alphabets $\mathcal{A}_{+}$ and $\mathcal{A}_{-}$ have the following values on the first four points in the forward orbit of $\omega^{*}$:
  \begin{displaymath}
    \begin{array}{llcll}
      \mathcal{A}_{+}(\omega^{*}) &= \{1,2\}, & \quad & \mathcal{A}_{-}(\omega^{*}) &=\{3,4\},\\
      \mathcal{A}_{+}(\base\omega^{*}) &= \{2,4\}, & \quad & \mathcal{A}_{-}(\base\omega^{*}) &=\{1,3\},\\
      \mathcal{A}_{+}(\base^2\omega^{*}) &= \{1,3\}, & \quad & \mathcal{A}_{-}(\base^2\omega^{*}) &=\{2,4\},\\
      \mathcal{A}_{+}(\base^3\omega^{*}) &= \{1,2\}, & \quad & \mathcal{A}_{-}(\base^3\omega^{*}) &=\{3,4\}.
    \end{array}
  \end{displaymath}

\begin{figure}[htb]
    \psfrag{vomega}{\small$v(\omega^{*})$}
    \psfrag{vomega1}{\small$v(\base\omega^{*})$}
    \psfrag{vomega2}{\small$v(\base^2\omega^{*})$}
    \psfrag{vomega3}{\small$v(\base^3\omega^{*})$}
    \begin{center}
      \includegraphics[width=13cm]{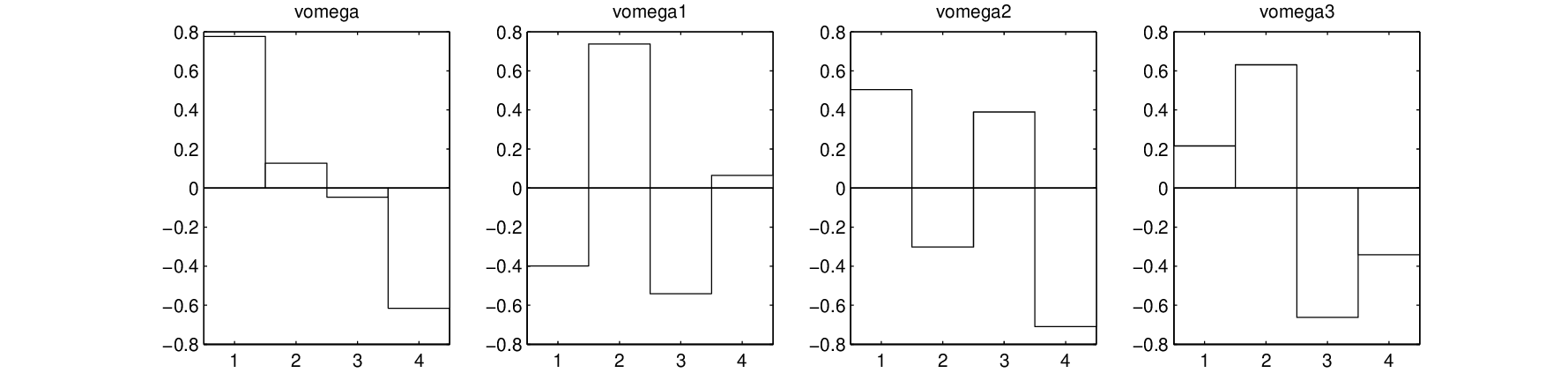}
    \end{center}
\caption{Vectors spanning Oseledets subspaces corresponding to the second Lyapunov exponent.}
    \label{fig:Subspaces}
\end{figure}

We construct the matrix $\submatrix$ on the orbit of $\omega^{*}$ of the random subshift according to \eqref{eq:B} in Theorem \ref{thm:rsmain}.
  The first three values of $\submatrix$  along the orbit are given below:

   {\small \begin{displaymath}
   \submatrix(\omega^*) = \left[ \begin{array}{rrrr}
     0  &  1  &  0  &  0\\
     0  &  1  &  0  &  1\\
     0  &  0  &  0  &  0\\
     0  &  0  &  0  &  0
    \end{array} \right], \ \
    \submatrix(\base\omega^*) = \left[ \begin{array}{rrrr}
     0  &  0  &  0  &  0\\
     1  &  0  &  1  &  0\\
     0  &  0  &  0  &  0\\
     0  &  0  &  1  &  0
    \end{array} \right],\ \
    \submatrix(\base^2\omega^*) = \left[ \begin{array}{rrrr}
     1  &  1  &  0  &  0\\
     0  &  0  &  0  &  0\\
     0  &  1  &  0  &  0\\
     0  &  0  &  0  &  0
    \end{array} \right].
  \end{displaymath}}

  Similarly the adjacency matrices of the complementary subshift $\Sigma_{\submatrix'}$ begin:
  \small{\begin{displaymath}
    \submatrix'(\omega^*) = \left[ \begin{array}{rrrr}
     0  &  0  &  0  &  0\\
     0  &  0  &  0  &  0\\
     0  &  0  &  1  &  0\\
     1  &  0  &  1  &  0
    \end{array} \right], \ \
    \submatrix'(\base\omega^*) = \left[ \begin{array}{rrrr}
     0  &  1  &  0  &  0\\
     0  &  0  &  0  &  0\\
     0  &  1  &  0  &  1\\
     0  &  0  &  0  &  0
    \end{array} \right],\ \
    \submatrix'(\base^2\omega^*) = \left[ \begin{array}{rrrr}
     0  &  0  &  0  &  0\\
     0  &  0  &  1  &  0\\
     0  &  0  &  0  &  0\\
     0  &  0  &  1  &  1
    \end{array} \right].
  \end{displaymath}}

The graphs of $\Sigma_\submatrix$ and $\Sigma_{\submatrix'}$ for the first four elements of the forward orbit of $\omega^*$ are shown in Figure \ref{fig:subgraph}.

\begin{figure}[hbt]
    \begin{center}
   \subfigure[Graph of $\Sigma_{\submatrix}$.]{
      \includegraphics[width=5.5cm]{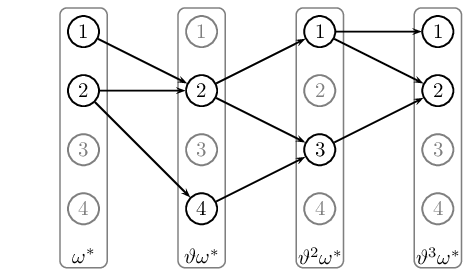}
      \label{fig:B}}\hspace{1cm}
      \subfigure[Graph of $\Sigma_{\submatrix'}$.]{
      \includegraphics[width=5.5cm]{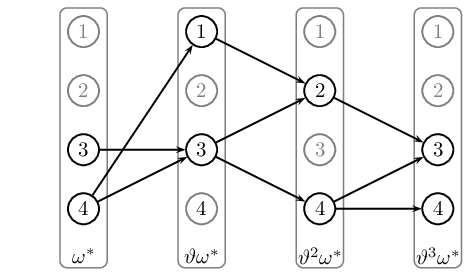}
      \label{fig:Bdash}}
    \end{center}
    \caption{Graphs of $\Sigma_{\submatrix}$ and $\Sigma_{\submatrix'}$ for first four transitions along the orbit of $\omega^{*}$. The grayed-out nodes in each belong to the corresponding complementary subshift.}
    \label{fig:subgraph}
  \end{figure}
 Using the algorithm on p745 in \cite{FLQ09} (with $M=20$ and $N=0$) we estimate the largest Lyapunov exponents, and hence the topological entropies, of these two subsifts on the forward orbit of $\omega^{*}$ to be $h(\Sigma_{\submatrix}(\omega^*))\approx \log 1.62$ and $h(\Sigma_{\submatrix'}(\omega^*))\approx \log1.58$. Both are larger than $\lambda_2\approx\log1.21$, as predicted by Theorem \ref{thm:rsmain}.
\end{example}

\appendix
\section{Grassmannians}
Let $(Y, \norm{\cdot}_{Y})$ be a Banach space. A subspace $E$ of $Y$ is said to be \emph{closed complemented} if it is closed and there exists a closed subspace $F$ of $Y$ such that $E\cap F = \{0\}$ and $E + F = Y$, where `$+$' denotes the direct sum; that is, any non-zero element of $Y$ can be uniquely written as $e+f$ with $e\in E$ and $f\in F$. A natural linear map on $Y$ is the \emph{projection} onto $F$ along $E$, defined by $\proj_{F\parallel E}(e+f) = f$.

The \emph{Grassmannian} $\mathcal{G}(Y)$ of the space $Y$ is the set of all closed complemented subspaces of $Y$. For any $E_0\in\mathcal{G}(Y)$ there exists at least one $F_0\in\mathcal{G}(Y)$ such that $E_0 \oplus F_0 = Y$, where `$\oplus$' now denotes topological direct sum. Every such $F_0$ defines a \emph{neighbourhood} $U_{F_0}(E_0)$ of $E_0$ by $U_{F_0}(E_0) := \{E\in\mathcal{G}(Y) \ : \ E\oplus F_0 = Y \}$. Furthermore, on every such neighbourhood we can define a $(E_0, F_0)$-\emph{local norm} by
\begin{displaymath}
	\norm{E}_{(E_0,F_0)} := \norm{\proj_{F_0\parallel E}|_{E_0}}_{op}.
\end{displaymath}
This induces a topological structure of a Banach manifold on $\mathcal{G}(Y)$. In particular, given a suitable topology on $\Omega$, the continuity of maps $\Omega\to\mathcal{G}(Y)$ is well-defined. In a similar fashion, by taking the corresponding Borel $\sigma$-algebra $\Borel(\mathcal{G}(Y))$ and $\Borel(\Omega)$ (or $\F$), we may also define measurability of such maps.

Recall that a function $f : Y\to \R$ is said to be \emph{upper semi-continuous} at $x_0\in Y$ if for every $\epsilon > 0$ there is an open neighbourhood $U_{x_0}$ such that $f(x) \le f(x_{0}) + \epsilon$ for all $x\in U_{x_0}$. The following lemma is used in the proof of Theorem \ref{thm:equiv_norms}.
\begin{lemma}
  Let $d$ be a fixed integer. Let $(Y,\norm{\cdot}_{Y})$ be a Banach space and $\norm{\cdot}_{*}$ a second norm on $Y$ such that $\norm{\cdot}_{*} \le \norm{\cdot}_{Y}$. The function $\psi: \mathcal{G}_d(Y) \to \mathbb{R}$ defined as $\psi(E) = \sup_{\xi\in E} \norm{\xi}_{Y}/\norm{\xi}_{*}$ is upper semi-continuous and therefore measurable.
	\label{lem:usc}
\end{lemma}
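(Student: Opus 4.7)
The plan is to establish upper semi-continuity at an arbitrary fixed point $E_0 \in \mathcal{G}_d(Y)$, from which Borel measurability follows automatically (sub-level sets $\{E : \psi(E) < c\}$ are open). First, I observe that $\psi(E_0) < \infty$: since $E_0$ is $d$-dimensional and $\norm{\cdot}_*$, $\norm{\cdot}_Y$ are both norms on the finite-dimensional space $E_0$, they are equivalent there, so the supremum of $\norm{\cdot}_Y/\norm{\cdot}_*$ over $E_0\setminus\{0\}$ is a finite number, which I denote $\psi(E_0)$.

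Next, I would fix a closed complement $F_0$ of $E_0$, work in the chart $U_{F_0}(E_0)$, and denote $\pi_E := \proj_{F_0\parallel E}$, with local norm $\alpha_E := \norm{E}_{(E_0,F_0)} = \norm{\pi_E|_{E_0}}_{op}$. For every $E \in U_{F_0}(E_0)$, I would define the linear bijection $T_E : E_0 \to E$ by $T_E(e_0) := e_0 - \pi_E(e_0)$; this lands in $E$ because $e_0 = T_E(e_0) + \pi_E(e_0)$ is the unique decomposition of $e_0$ along $E \oplus F_0$, and invertibility is immediate because both $E_0$ and $E$ are complements of $F_0$.

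For $\xi = T_E(e_0) \in E$, I would bound $\norm{\xi}_Y \le (1+\alpha_E)\norm{e_0}_Y$ by the triangle inequality. For the denominator, the chain
\[
\norm{\pi_E(e_0)}_* \le \norm{\pi_E(e_0)}_Y \le \alpha_E \norm{e_0}_Y \le \alpha_E\,\psi(E_0)\,\norm{e_0}_*
\]
gives $\norm{\xi}_* \ge \norm{e_0}_* - \norm{\pi_E(e_0)}_* \ge (1 - \alpha_E\,\psi(E_0))\,\norm{e_0}_*$ whenever $\alpha_E\,\psi(E_0) < 1$. Taking the ratio and supremizing over $e_0 \in E_0\setminus\{0\}$ (equivalently, over $\xi \in E\setminus\{0\}$ since $T_E$ is a bijection) yields
\[
\psi(E) \le \frac{(1+\alpha_E)\,\psi(E_0)}{1 - \alpha_E\,\psi(E_0)}.
\]
As $E \to E_0$ in $\mathcal{G}_d(Y)$, $\alpha_E \to 0$, so the right-hand side converges to $\psi(E_0)$, giving $\limsup_{E \to E_0}\psi(E) \le \psi(E_0)$, which is upper semi-continuity.

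The main obstacle is the asymmetry between the two norms: the Grassmannian local norm only controls $\pi_E|_{E_0}$ with respect to $\norm{\cdot}_Y$, but the lower bound on $\norm{\xi}_*$ requires a $*$-norm estimate on $\pi_E(e_0)$. The crucial move is to convert the $Y$-bound into a $*$-bound on $E_0$ by paying a factor of $\psi(E_0)$, which is finite precisely because of the finite-dimensional hypothesis $\dim E_0 = d$; without this hypothesis the argument would collapse.
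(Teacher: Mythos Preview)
Your proof is correct and essentially identical to the paper's own argument: both work in a chart $U_{F_0}(E_0)$, represent an arbitrary $\xi\in E$ as $e_0-\proj_{F_0\parallel E}(e_0)$ with $e_0\in E_0$, use the triangle inequality for the numerator and the chain $\norm{\proj_{F_0\parallel E}(e_0)}_{*}\le\norm{\proj_{F_0\parallel E}(e_0)}_{Y}\le\alpha_E\norm{e_0}_{Y}\le\alpha_E\,\psi(E_0)\,\norm{e_0}_{*}$ for the denominator, arriving at the same bound $(1+\alpha_E)\psi(E_0)/(1-\alpha_E\,\psi(E_0))$ (the paper writes this as $(1+\epsilon)/(\psi(E_0)^{-1}-\epsilon)$, which is the same quantity). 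Your exposition makes the bijection $T_E\colon E_0\to E$ explicit, but the substance is the same.
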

\begin{proof}
	Each $E\in\mathcal{G}_{d}(Y)$ is finite-dimensional. Since all norms on finite dimensional spaces are equivalent, the function $\psi$ is well defined and $1\le \psi(E) < \infty$ for all $E\in\mathcal{G}_d(Y)$. For any $E_0 \in \mathcal{G}_{d}(Y)$, let $F_{0} \in \mathcal{G}^{d}(Y)$ be such that $E_{0} \oplus F_{0} = Y$. For any $\epsilon \in (0, \psi(E_0)^{-1})$ let $N_{\epsilon} \subset U_{F_0}(E_0)$ be a neighbourhood of $E_{0}$ such that for all $E\in N_{\epsilon}$,
	\begin{displaymath}
	\norm{E}_{(E_0,F_0)} = \norm{\proj_{F_0\parallel E}|_{E_0}}_{op} < \epsilon.
	\end{displaymath}
        Take any $E\in N_\epsilon$. For any $x\in E$ write $x = y - z$ where $y\in E_0$ and $z\in F_0$. Then $z = \proj_{F_0\parallel E}(y)$ and $\norm{z}_{*}/\norm{y}_{Y} \le \norm{z}_{Y}/\norm{y}_{Y} < \epsilon$. Now
	\begin{displaymath}
          \frac{\norm{x}_{Y}}{\norm{x}_{*}} = \frac{\norm{y + x - y}_{Y}}{\norm{y+x-y}_{*}}
          \le \frac{\norm{y}_{Y} + \norm{z}_{Y}}{\norm{y}_{*} - \norm{z}_{*}}
          < \frac{\norm{y}_{Y} + \epsilon\norm{y}_{Y}}{\norm{y}_{*} - \epsilon\norm{y}_{Y}}
		\le \frac{1+ \epsilon}{\psi(E_0)^{-1} - \epsilon}.
	\end{displaymath}
	The right hand side converges to $\psi(E_{0})$ as $\epsilon\to 0$. As $E_0$ and $\epsilon$ are arbitrary, this establishes upper semi-continuity of $\psi$ on all of $\mathcal{G}_{d}(Y)$.
\end{proof}

\newcommand{\etalchar}[1]{$^{#1}$}

\end{document}